\renewcommand{\epsilon}{\varepsilon}
\numberwithin{equation}{section}
\newtheoremstyle{thmlemcorr}{10pt}{10pt}{\itshape}{}{\bfseries}{.}{10pt}{{\thmname{#1}\thmnumber{ #2}\thmnote{ (#3)}}}
\newtheoremstyle{thmlemcorr*}{10pt}{10pt}{\itshape}{}{\bfseries}{.}\newline{{\thmname{#1}\thmnumber{ #2}\thmnote{ (#3)}}}
\newtheoremstyle{defi}{10pt}{10pt}{\itshape}{}{\bfseries}{.}{10pt}{{\thmname{#1}\thmnumber{ #2}\thmnote{ (#3)}}}
\newtheoremstyle{remexample}{10pt}{10pt}{}{}{\bfseries}{.}{10pt}{{\thmname{#1}\thmnumber{ #2}\thmnote{ (#3)}}}
\newtheoremstyle{ass}{10pt}{10pt}{}{}{\bfseries}{.}{10pt}{{\thmname{#1}\thmnumber{ A#2}\thmnote{ (#3)}}}
\theoremstyle{thmlemcorr}
\newtheorem{theorem}{Theorem}
\numberwithin{theorem}{section}
\newtheorem{lemma}[theorem]{Lemma}
\newtheorem{proposition}[theorem]{Proposition}
\newtheorem{conclusion}[theorem]{Conclusion}
\theoremstyle{thmlemcorr*}
\newtheorem{theorem*}{Theorem}
\newtheorem{lemma*}[theorem]{Lemma}
\newtheorem{corollary*}[theorem]{Corollary}
\newtheorem{proposition*}[theorem]{Proposition}
\newtheorem{problem*}[theorem]{Problem}
\newtheorem{conjecture*}[theorem]{Conjecture}
\theoremstyle{defi}
\theoremstyle{remexample}
\newtheorem{remark}[theorem]{Remark}
\newtheorem{example}[theorem]{Example}
\theoremstyle{ass}
\newcommand{\Acal}{\mathcal{A}}
\newcommand{\Bcal}{\mathcal{B}}
\newcommand{\Dcal}{\mathcal{D}}
\newcommand{\Ecal}{\mathcal{E}}
\newcommand{\Gcal}{\mathcal{G}}
\newcommand{\Mcal}{\mathcal{M}}
\newcommand{\Qcal}{\mathcal{Q}}
\newcommand{\Scal}{\mathcal{S}}
\newcommand{\Tcal}{\mathcal{T}}
\newcommand{\Ycal}{\mathcal{Y}}
\newcommand{\Zcal}{\mathcal{Z}}
\newcommand{\Ibb}{\mathbb{I}}
\DeclareMathOperator{\id}{id}
\DeclareMathOperator{\curl}{curl}
\newcommand{\norm}[1]{\|#1\|}
\newcommand{\absB}[1]{\Bigl|#1\Bigr|}
\newcommand{\dd}{\;\mathrm{d}}
\newcommand{\N}{\mathbb{N}}
\newcommand{\R}{\mathbb{R}}
\newcommand{\weakly}{\rightharpoonup}
\newcommand{\weaklystar}{\overset{*}\rightharpoonup}
\newcommand{\eps}{\epsilon}
\newcommand{\tr}{{\rm Tr\,}}
\newcommand{\Yrig}{Y_{\rm rig}}
\newcommand{\Ysoft}{Y_{\rm soft}}
\newcommand{\CCC}{\color{black}}
\newcommand{\BBB}{\color{black}}
\newcommand{\ups}{^{(s)}}
\newcommand{\upe}{^{(e_1)}}
\newcommand{\upee}{^{(e_2)}}
\DeclareMathOperator{\SO}{SO}
\newcommand{\be}[1]{\begin{equation}\label{#1}}
\newcommand{\een}{\end{equation}}
\newcommand{\ep}{\varepsilon}
\title[Homogenization in crystal plasticity for stratified composites]{On static and evolutionary homogenization in crystal plasticity for stratified composites}
\author{Elisa Davoli}
\address{Institute of Analysis and Scientific Computing, TU Wien, Wiedner Hauptstra\ss e 8--10, 1040 Vienna, Austria}
\email{elisa.davoli@tuwien.ac.at}
\author{Carolin Kreisbeck}
\address{Mathematisch-Geographische Fakult\"at, Katholische Universit\"at Eichst\"att-Ingolstadt, Osten\-stra{\ss}e 28, 85072 Eichst\"att, Germany}
\email{carolin.kreisbeck@ku.de}
\begin{document}

 
\maketitle
\begin{abstract}  
\vspace{-12pt}   
The starting point for this work is a static macroscopic model for a high-contrast layered material in single-slip finite crystal plasticity, identified in~[Christowiak \& Kreisbeck, \textit{Calc.\,Var.\,PDE} (2017)] as a homogenization limit via $\Gamma$-convergence. 
First, we analyze the minimizers of this limit model, addressing the question of uniqueness and deriving necessary conditions. In particular, it turns out that at least one of the defining quantities of an energetically optimal deformation, namely the rotation and the shear variable, is uniquely determined, and we identify conditions that give rise to a trivial material response in the sense of rigid-body motions. 
The second part is concerned with extending the static homogenization to an evolutionary $\Gamma$-convergence-type result  for rate-independent systems in specific scenarios, that is, under certain assumptions on the slip systems and suitable regularizations of the energies,  where energetic and dissipative effects decouple in the limit. Interestingly, when the slip direction is aligned with the layered microstructure, the limiting system is purely energetic, which can be interpreted as a loss of dissipation through homogenization. 
                    
 \vspace{8pt}
 \noindent\textsc{MSC(2020):}
  49J45 (primary); 74Q05, 74C15.

\noindent\textsc{Keywords:} homogenization, $\Gamma$-convergence, composite materials, finite crystal plasticity.

\noindent\textsc{Date:} \today.

 \end{abstract}   
\section{Introduction}
Motivated by new trends in technology that require materials with non-standard properties, the study of artificially engineered composites (metamaterials) has been the subject of an intense research activity at the triple point between mathematics, physics, and materials science.

Here, we investigate the effective deformation behavior of a special class of mechanical metamaterials exhibiting the following two features whose interplay generates a highly anisotropic material response: (i) the geometry of the heterogeneities is characterized by periodically alternating layers of two different components; (ii) the material properties of the two components show strong differences, in the sense that one is rigid, while the other one is softer, allowing for large-strain elastoplastic deformations along prescribed slip directions. 

The asymptotic analysis of variational models for such stratified materials with fully rigid components and complete adhesion between the phases was initiated by {\sc Christowiak} and {\sc Kreisbeck} 
in \cite{ChK17}, which is the starting point for this paper.
 More precisely, the subject of \cite{ChK17} is a two-dimensional homogenization problem in the context of finite elastoplasticity, with geometrically nonlinear but rigid elasticity, where the softer component can be deformed along a single active plastic slip system with linear self-hardening. At the core of the homogenization result via $\Gamma$-convergence~\cite[Theorem~1.1]{ChK17} lies the characterization of the weak closures of the set of admissible deformations via an asymptotic rigidity result. 
 In \cite{davoli.kreisbeck.ferreira}, these techniques have been carried forward to a model for plastic composites without linear hardening in the spirit of~\cite{CoT05}, which leads to a variational limit problem on the space of functions of bounded variation. 
Natural generalizations of these models to three (and higher) dimensions, where the material heterogeneities are either layers or fibers are studied~\cite{ChK18} and~\cite{EKR21}, respectively. Note that these two references, which are formulated in context of nonlinear elasticity, use energy densities with $p$-growth for $1<p<+\infty$, and consider also non-trivial elastic energies on the stronger components, which allows treating also very stiff (but not necessarily rigid) reinforcements. 
As for their mathematical structure, all the aforementioned papers feature energies of integral form, characterized by linear or superlinear growth, subject to non-convex differential constraints. 
For related work on alternative approaches to layered and fiber-reinforced high-contrast composites with different choices of scaling relations between the elastic constants, thickness and adhesive parameters, see e.g.~\cite{BoB02, BrE07, Jar13, PaS16}. 

Our goal in this work is twofold. First, we provide an analysis of minimizers for the effective energy functional derived in \cite{ChK17} as a homogenized $\Gamma$-limit; 
 in particular, we address the question of uniqueness and identify necessary conditions for minimizers. Second, for some specific case studies, we complement the static homogenization of~\cite{ChK17} by an evolutionary $\Gamma$-convergence analysis. 
 Generally speaking, evolutionary $\Gamma$-convergence aims at transferring the concept of limit passages in parameter-dependent stationary variational problems to time-dependent settings. For energetic rate-independent systems such a theory was developed by {\sc Mielke}, {\sc Roub\'i\v{c}ek} and {\sc Stefanelli} in~\cite{MRS08}. In parallel to the main feature of $\Gamma$-convergence (see e.g.~\cite{Bra05, Dal93}), which guarantees the convergence of solutions, i.e.,~(almost) minimizers of the parameter-dependent functionals converge to minimizers of the limit functional, evolutionary $\Gamma$-convergence for rate-independent systems implies that energetic solutions, again converge to energetic solutions of the limit system. In cases where energetic solutions do not exist, which is the situation in this paper, one can work instead with solutions to associated approximate incremental problems. For a comprehensive introduction to the topic, we refer to~\cite[Sections~2.3-2.5, Section 3.5.4]{MiR15}; for applications in linearized elastoplasticity, see e.g.~\cite{Han11, MiT07} on homogenization, or~\cite{MiS13} on a rigorous justification through a rigorous linearization of finite-strain plasticity.

  In order to describe our results in more detail, some notation needs to be introduced.
Let $\Omega\subset \R^2$ be a bounded Lipschitz domain, which is assumed to represent the reference configuration of a high-contrast material with bilayered microstructure encoded by the alternation of two horizontal layers, a soft and a rigid one, see Figure \ref{fig:material}. Without loss of generality, we can assume that 
\begin{align}\label{barycenter_Omega}
\int_\Omega x\dd{x}=0,
\end{align} namely that the barycenter of $\Omega$ lies in the origin.
To mathematically describe the geometry of the heterogeneities, consider the periodicity cell $Y:=[0,1)^2$,  which we  subdivide into 
$Y=Y_{\rm soft}\cup Y_{\rm rig}$ with $Y_{\rm soft}:=[0,1)\times [0, \lambda)$ for $\lambda\in (0,1)$ and $Y_{\rm rig}:=Y\setminus Y_{\rm soft}$. All sets are extended by periodicity to  $\R^2$.  The (small) parameter $\ep>0$ describes the thickness of a pair (one rigid, one softer) of fine layers, and can be viewed as the intrinsic periodicity scale of the microstructure. The  collection of all rigid and soft layers in $\Omega$ corresponds to the sets $\ep Y_{\rm rig}\cap\Omega$ and $\ep Y_{\rm soft}\cap\Omega$, respectively.  
%
%
\begin{center}
\begin{figure}[t]
\begin{tikzpicture}[scale=1.5]
		\begin{scope}[scale=0.65] 
			\draw (0,0) to [out=90,in=180] (2,2) to [out=0,in=200] (3,2.25) to [out=20,in=225] (4,3) to [out=45, in=135] (6,3) to [out=-45, in=90] (6.3,2) to [out=-90,in=45] (5,0) to [out=225,in=0] (2,-1) to [out=180,in=-90] (0,0);
		\end{scope}
  \draw(.8,-0.1) node      
        { \begin{normalsize}$\textcolor{black}{\Omega \subset
\R^2}$\end{normalsize}};       
               \draw[black] (2,0.5) -- (2.6,0.5) -- (2.6, 0.75) --
        (2, 0.75) -- (2,0.5);
       \draw[black
        ] (.9+4,-3+4.2) -- (3.5+4,-3+4.2) -- (3.5+4,
-4.25+4.2) --         (.9+4, -4.25+4.2) -- (.9+4,-3+4.2);
        \draw[thick,gray!70,
         dotted] (.9+4,-3+4.2) -- (2,0.75);
        \draw[thick,gray!70,
         dotted] (2.,0.5) -- (.9+4,-4.25+4.2);
         \foreach \y in {-4.25+4.2, -4+4.2, -3.75+4.2, -3.5+4.2,
-3.25+4.2} {\fill[fill=blue!30!white] (0.9+4,\y+.1) -- (3.5+4,\y+.1)
-- (3.5+4,\y+.25) -- (.9+4,\y+.25) -- cycle;};
 \draw[black, |-|] (3.6+4,-4.25+4.7) --      (3.6+4,-4+4.7);
  \draw(3.7+4.02,-4.13+4.7) node      
        {\begin{normalsize}$\textcolor{black}{\ \varepsilon}$\end{normalsize}}; 
        \end{tikzpicture}
\caption{Illustration of the reference configuration $\Omega$ and the stratified microstructure at length-scale $\eps$}
\label{fig:material}
\end{figure}
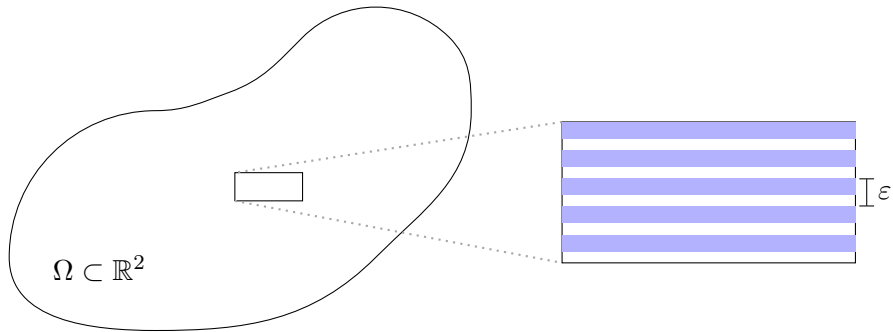
\end{center}
Regarding the material properties, the body as a whole 
 exhibits an elastoplastic behavior characterized by finite single-slip crystal plasticity with rigid elasticity throughout, but the deformations on the individual rigid layers are restricted to global rotations or translations. 
We point out that several different models of finite elastoplasticity have been proposed and analyzed in the literature, see e.g.~\cite{hill} for a general introduction; recent contributions include an analysis of the incompatibility tensor~\cite{amstutz.vangoethem}, a formulation keeping track of frame-invariance of intermediate configurations~\cite{grandi.stefanelli2}, as well as discussions of different (multiplicative) decompositions of deformation gradients~\cite{davoli.francfort, delpiero, RDOC18}.
Here, we adopt the classical approach introduced  in \cite{Kro60, lee}: The gradient of every deformation $u:\Omega\to \mathbb{R}^2$ decomposes into the product of an elastic strain $F_{\rm el}$, and a plastic one, $F_{\rm pl}$, satisfying
\begin{align}\label{multi_decomposition}
\nabla u=F_{\rm el}F_{\rm pl}.
\end{align}
Due to the assumption that the elastic behavior of the body is purely rigid, one has
\begin{align*}
\text{$F_{\rm el}\in SO(2)$\quad almost everywhere (a.e.) in $\Omega$,}
\end{align*} 
where $SO(2)$ denotes the set of rotations in $\R^2$. As for the plastic part, the presence of  a single active slip, with slip direction $s \in \Scal^1:=\{s \in \R^2: |s|=1\}$
and associated slip plane normal $m=s^{\perp}$, translates into
\begin{align}\label{Fpl}
F_{\rm pl}={\rm Id}+\gamma s\otimes m,
\end{align}
where the shear coefficient $\gamma$ measures the amount of slip. While the material is free to glide along the slip system in the softer phase, it is required that $\gamma$ vanishes on the layers consisting of the rigid material, i.e.,~$\gamma=0$ in $\ep Y_{\rm rig}\cap \Omega$. 
 
Collecting these modeling assumptions, we define, for $\ep>0$, the class $\mathcal{A}^{(s)}_\ep$ of admissible
 deformations as 
 \begin{align}\label{eq:def-Aep}
 \begin{split}
\mathcal{A}^{(s)}_\ep& := 
\{u\in \Ycal: \nabla u = R_u({\rm Id}+\gamma_u s\otimes m)
\text{ a.e.~in $\Omega$,} \\ &\hspace{2.5cm}  R_u\in L^\infty(\Omega;SO(2)), \gamma_u\in L^2(\Omega), \text{$\gamma_u=0$ a.e.~in $\ep Y_{\rm rig}\cap
\Omega$} \},
\end{split}
 \end{align}
where $\Ycal =W^{1,2}(\Omega;\R^2)\cap L^2_0(\Omega;\R^2)$, and $L^2_0(\Omega;\R^2)=\{u\in L^2(\Omega;\R^2): \int_\Omega u\dd x=0\}$ denotes the space of $L^2$-functions with zero average in $\Omega$. 
In light of the completely rigid behavior of the body in $\ep Y_{\rm rig}\cap \Omega$, and the plastic deformation behavior along a single slip system, for which we assume linear hardening, in $\ep Y_{\rm soft}\cap \Omega$, the stored elastoplastic energy of a deformation $u\in \Ycal$ reads as
\begin{align}
E_\ep^{(s)}(u) 
&=\begin{cases}
\displaystyle \int_{\ep Y_{\rm soft}\cap \Omega} |\gamma_u|^2\dd x & \text{if $u\in \mathcal{A}^{(s)}_\ep$,}
\\
+\infty & \text{otherwise;} 
\end{cases}\label{eq:defEeps}
\end{align}          
notice that a model for homogeneous materials with the properties of the softer component above has been studied in~\cite{conti13car, conti.dolzmann.kreisbeck}, and we refer the reader to~\cite{CoT05} for a corresponding analysis in the absence of hardening. 

In order to state the homogenization result from~\cite{ChK17}, let us introduce, for any slip direction $s=(s_1, s_2)\in \Scal^1$ 
 the following class of deformations
\begin{align}
\label{eq:def-A}\Acal\ups:=\{u\in \Ycal: \nabla u= R_u(\Ibb + \gamma_u e_1\otimes e_2), R_u\in SO(2), \gamma_u \in L^2(\Omega), \gamma_u\in K\ups \text{ a.e.~in $\Omega$} \},
\end{align}
with
\begin{align}
\label{eq:def-K}K\upee:= \{0\}, \quad K\upe:=\R, \quad \text{and}\quad  
K\ups:=\begin{cases}
[0, -2\lambda \frac{s_1}{s_2}] & \text{if $s_1s_2<0$,}\\ [-2\lambda \frac{s_1}{s_2}, 0] & \text{if $s_1s_2>0$,} 
\end{cases} \quad \text{for $s\notin \{e_1, e_2\}$.}
\end{align}
Throughout the paper, we will often use - without further mention - one of the following alternative representations of the set~in~\eqref{eq:def-A}, that is,
\begin{align}
\nonumber\Acal\ups&=\{u\in \Ycal: u(x) = R_u(x+\Gamma_u(x) e_1)\, \text{for $x\in \Omega$}, \Gamma_u\in W^{1,2}(\Omega)\cap L_0^2(\Omega),\\
&\qquad\qquad\qquad\qquad\label{eq:A-alt1} R_u\in \SO(2), \partial_1 \Gamma_u=0, \partial_2 \Gamma_u\in K\ups\text{ a.e. in }\Omega\}\\
&=\{u\in \Ycal: \nabla u \in \Mcal^{(s)}, \gamma_u\in K\ups \text{ a.e.~in $\Omega$} \},\label{eq:A-alt2}
\end{align}
where\begin{equation}
\label{eq:def-Ms}
\Mcal\ups:=\{F\in \R^{2\times 2}: \det F=1, |Fs|=1\}.
\end{equation} 
We observe that if $u\in \Acal^{(s)}$, then $\curl \nabla u=0$ implies $\partial_1 \gamma_u=0$. Hence, the set $\Acal^{(s)}$  is intrinsically one-dimensional.  

With this notation in place, we can now formulate~\cite[Theorem~1.1]{ChK17}, which characterizes the macroscopic material response of the considered stratified high-contrast composites in terms of $\Gamma$-convergence: 
The $\Gamma$-limit for $\eps\to 0$ of the energies 
$(E_\ep^{(s)})_\eps$ in the weak $W^{1,2}$-topology is given for $u\in \Ycal$ by the functional 
\begin{align}\label{Eups}
E\ups(u) & := \begin{cases}\displaystyle \frac{{s_1}^2}{\lambda} \int_{\Omega}  \gamma_u^2\dd{x} -2s_1s_2\int_{\Omega} \gamma_u \dd{x} & \text{for $u\in \Acal\ups$,}\\
+\infty & \text{otherwise}. \end{cases}
\end{align}
As $\Gamma$-convergence is invariant under continuous perturbations, the previous result is not affected by adding external loading to the stored energy functionals $E_\eps\ups$ in~\eqref{eq:defEeps}.
If we augment $E\ups$ with a term describing work due to a body force with density $g\in L^2(\Omega;\R^2)$, the functional to study is
\begin{align}\label{eq:en-static}
  I\ups_g (u):=E\ups(u) -\int_{\Omega} g\cdot u\dd{x} 
\end{align}
for $u\in \Ycal$. 

 The essence of our first main result can then be summarized in simple terms as follows:
\begin{conclusion}[{Uniqueness of rotations and shear coefficients}] 
\label{thm:main1} For any slip direction $s\in \Scal^1$ and any applied load $g\in L^2(\Omega;\R^2)$, either the rotation or the shear coefficient associated to minimizers of $I\ups_g$ are uniquely determined.
\end{conclusion}
We refer to Section \ref{sec:min-static} for the precise assumptions, as well as to Lemma~\ref{lem:representation} and Proposition~\ref{prop:uniqueness} for the formulation of the statement and the proof of this result. Besides this general observation about uniqueness, we also discuss necessary and sufficient conditions for minimizers under specific conditions on the slip directions and the applied loads, including criteria for trivial deformation behavior in the form of rigid-body motions.  

In the second part of this paper (see Section~\ref{sec:evol}), we expand the homogenization of the introduced static model to a quasistatic context by incorporating time-dependent loadings and dissipation acting on the shear variable; to be more precise, we work with dissipations $\Dcal$ 
 that are given as the difference of the shear coefficients measured in the $L^1$-norm. Our second main contribution regards
the asymptotic analysis of such extended models in the framework of evolutionary $\Gamma$-convergence for energetic rate-independent systems (see~\cite{MiR15}, as well as the beginning of Section~3, where we give a brief outline of this theory adapted to the setting of this paper). 
An intuitive, rough version of our findings reads: 
\begin{conclusion}[{Evolutionary \boldmath{$\Gamma$}-convergence}]
\label{thm:main2}
Suppose that the slip direction is aligned or orthogonal to the orientation of the material microstructure, meaning $s=e_1$ or $s=e_2$. Considering a family of rate-independent systems with (suitably regularized versions of) the stored energies $E_\ep\ups$ from~\eqref{eq:defEeps} and dissipation distance $\Dcal$, 
it follows that energetic and dissipative effects decouple in the limit $\eps\to 0$. Moreover, (approximate) solutions (to the associated time-discrete incremental problems) of the $\ep$-dependent systems converge to energetic solutions of a system involving the stored energy $E\ups$ from~\eqref{Eups} and $\Dcal$. 

If $s=e_1$, the limiting system is very restrictive and corresponds to a purely energetic evolution without any dissipation, so that one can speak of a loss of dissipation through homogenization. 
\end{conclusion}

The precise formulation of Conclusion~\ref{thm:main2} for $s=e_2$ and $s=e_1$ can be found in Theorems~\ref{theo:evol2} and~\ref{theo:evol1}, respectively. 
In both cases, the proof strategy relies on the well-established scheme in~\cite[Section~2.3-2.5]{MiR15}.
The only delicate point is the construction of a so-called ``mutual recovery sequence", for which we utilize tailored arguments that keep track of the special geometry of the problem.

This article is organized as follows: Section \ref{sec:min-static} is entirely devoted to the study of the static minimization problem, while Section~\ref{sec:evol} deals with evolutionary $\Gamma$-convergence.

\subsection*{Notation} 
Throughout the manuscript, $|\cdot|$ denotes the Euclidean norm in $\R^2$, and $\Scal^1$ is the unit sphere in $\R^2$, i.e., $\Scal^1:=\{s\in \R^2:\,|s|=1\}$. 
For $a\in \R^2$ with $a\neq 0$, we use the short-hand notation $\bar{a}=a/|a|$. 
We take $e_1, e_2$ as the standard unit-vectors in $\R^2$, and define $a^\perp:=a_1e_2 -a_2e_1\in \R^2$ for any $a=(a_1,a_2)\in \R^2$. Analogously, for functions $g:\Omega\to \R^2$, the map $g^\perp$ is given as $g^\perp(x) = g(x)^\perp$ for all $x\in \Omega$.
For the trace of a matrix $A\in \R^{2\times 2}$, we write $\tr A$, while ${\rm id}:\R^2\to\R^2$ denotes the identity map and  $\mathbb{I}$ its differential.  Futher, $\mathbbm{1}_U:\R^2\to \{0,1\}$ is the indicator function for a set $U\subset \R^2$, i.e., $\mathbbm{1}_U(x)=1$ if and only if $x\in U$.  \CCC Our notation for the dual of a vector space $\Ycal$ is $\Ycal'$, and the corresponding duality pairing is denoted by $\langle \cdot, \cdot\rangle_{\Ycal', \Ycal}$, or simply by $\langle \cdot ,\cdot \rangle$. \BBB
As for $L^p$- and Sobolev spaces, we follow the classical notational conventions, indicating explicitly the target space other than $\R$, we write, e.g.,~$L^2(\Omega;\R^2)$ and $W^{1,2}(\Omega)=W^{1,2}(\Omega;\R)$.Finally, when speaking of the convergence of a family $(a_\eps)_\eps$ with a continuous parameter $\eps>0$, we actually mean that each of the sequences $(a_{\eps_j})_j$ with $\eps_j\searrow 0$ converges for $j\to +\infty$ to the same limit.

\section{Minimizers of the static homogenized limit problem}
\label{sec:min-static}
In this section, we analyze, for every $g\in L^2(\Omega;\R^2)$ and $s\in \Scal^1$, the variational problem
\begin{align}\label{stat_minprob}
\text{minimize}\quad  I\ups_g(u) = E\ups(u)  - \int_\Omega g\cdot u\dd{x}  \qquad \text{for $u\in \Ycal$.} 
\end{align}
Existence of solutions to~\eqref{stat_minprob} follows from the direct method in the calculus of variations. Indeed, observing that $I\ups_g$ is the $\Gamma$-limit resulting from the homogenization procedure studied in~\cite{ChK17}, along with the fact that the term $u\mapsto -\int_{\Omega} g\cdot u\dd{x}$ constitutes a continuous perturbation, yields the lower semicontinuity of the functional $I\ups_g$ regarding the weak topology of $W^{1,2}(\Omega;\R^2)$.  The sequential weak compactness of the sublevel sets of $I\ups_g$ follows from the quadratic growth of \CCC $I_g\ups$  with respect to $\norm{\gamma_u}_{L^2(\Omega)}$ \BBB in combination with the special structure of $\Acal\ups$, which contains only globally rotated shear deformations, see~\eqref{eq:def-A}.

The aim of this section is to address the issues of uniqueness of solutions to~\eqref{stat_minprob} and their explicit characterization. We start by introducing some auxiliary quantities for our analysis: 
For given $\Gamma\in L^2(\Omega)$ and $g\in L^2(\Omega;\R^2)$, let 
\begin{align*}
\Lambda(g, \Gamma) := \widehat{g} + \int_{\Omega} \Gamma g \dd{x} 
\end{align*}
with 
\begin{align}\label{hatg}
\widehat{g} := \int_\Omega x_1 g(x) -x_2 g^{\perp}(x) \dd{x} = \int_{\Omega} \widehat{G}(x)x\dd{x} 
\end{align}
and $\widehat{G}(x): = (g(x)|-g^\perp(x))$ for $x\in \Omega$.

 Given this terminology, one obtains for $u\in \Acal\ups$ with $u=R_u({\rm id}+\Gamma_u e_1)$, cf.~\eqref{eq:A-alt1}, that 
\begin{align}\label{144}
\int_\Omega g\cdot u\dd{x} = \int_\Omega x_1 g\cdot R_ue_1 + x_2 g\cdot (R_ue_1)^\perp + \Gamma_u g\cdot R_ue_1 \dd{x}=  \Lambda(g, \Gamma_u)\cdot R_ue_1. 
\end{align}

Motivated by this observation, we can set up a variational problem that is equivalent to solving~\eqref{stat_minprob} and involves only the shear variable $\Gamma$. This reformulation is made precise in the following lemma.

\begin{lemma}\label{lem:representation}
Let $J_g\ups: W^{1,2}(\Omega)\cap L_0^2(\Omega)\to \R_\infty:=\R\cup\{+\infty\}$ be given by 
 \begin{align*}
 J_g\ups(\Gamma)  & := \begin{cases}\displaystyle \frac{{s_1}^2}{\lambda} \int_{\Omega} ( \partial_2 \Gamma)^2\dd{x} -2s_1s_2\int_{\Omega} \partial_2 \Gamma\dd{x} - |\Lambda(g, \Gamma)| & \text{if $\partial_1 \Gamma=0$, $\partial_2 \Gamma\in K\ups$ a.e.~in $\Omega$,}\\
+\infty & \text{otherwise.} \end{cases}
 \end{align*}
 Then, $u\in \Acal\ups$ is a minimizer of $I_g\ups$ if and only if $\Gamma_u$ is a minimzer of $J_g\ups$ and
 \begin{align}
 \label{eq:char-Gammav}
\begin{cases} R_ue_1 = \overline{\Lambda(g, \Gamma_u)}  & \text{if $\Lambda(g, \Gamma_u)\neq 0$,}\\ R_u\in SO(2) & \text{if $\Lambda(g, \Gamma_u)=0$.} \end{cases}
 \end{align}
Moreover, $\min_{u\in \Ycal} I_g\ups(u)=\min_{\Gamma\in W^{1,2}(\Omega)\cap L_0^2(\Omega)} J_g\ups(\Gamma)$. 
\end{lemma}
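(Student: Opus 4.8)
The plan is to exploit the decomposition of any admissible deformation $u\in\Acal\ups$ into a rotation $R_u\in SO(2)$ and a shear field $\Gamma_u\in W^{1,2}(\Omega)\cap L_0^2(\Omega)$ with $\partial_1\Gamma_u=0$ and $\partial_2\Gamma_u\in K\ups$ a.e., as recorded in~\eqref{eq:A-alt1}, together with the identity~\eqref{144}. The key observation is that $E\ups(u)$ depends on $u$ only through $\Gamma_u$, whereas the loading term splits as $\int_\Omega g\cdot u\dd x=\Lambda(g,\Gamma_u)\cdot R_u e_1$; hence the dependence on the rotation $R_u$ enters only through the scalar $\Lambda(g,\Gamma_u)\cdot R_u e_1$. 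Since $R_u e_1$ ranges over all of $\Scal^1$ as $R_u$ ranges over $SO(2)$, one can optimize over $R_u$ for fixed $\Gamma$ separately: the quantity $\Lambda(g,\Gamma)\cdot R_u e_1$ is maximized exactly when $R_u e_1=\overline{\Lambda(g,\Gamma)}$ if $\Lambda(g,\Gamma)\neq0$ (with maximal value $|\Lambda(g,\Gamma)|$), and equals $0$ for every choice of $R_u$ if $\Lambda(g,\Gamma)=0$. This is the elementary Cauchy--Schwarz step that produces both the functional $J_g\ups$ and the characterization~\eqref{eq:char-Gammav}.

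Concretely, I would proceed as follows. First, fix $u\in\Acal\ups$ and write, using~\eqref{Eups} and~\eqref{144},
\begin{align*}
I_g\ups(u)=\frac{s_1^2}{\lambda}\int_\Omega(\partial_2\Gamma_u)^2\dd x-2s_1s_2\int_\Omega\partial_2\Gamma_u\dd x-\Lambda(g,\Gamma_u)\cdot R_u e_1.
\end{align*}
Since $|\Lambda(g,\Gamma_u)\cdot R_u e_1|\le|\Lambda(g,\Gamma_u)|$ with equality iff $\Lambda(g,\Gamma_u)=0$ or $R_u e_1=\overline{\Lambda(g,\Gamma_u)}$, one gets $I_g\ups(u)\ge J_g\ups(\Gamma_u)$, with equality precisely under condition~\eqref{eq:char-Gammav}. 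Second, for the reverse direction, given any $\Gamma\in W^{1,2}(\Omega)\cap L_0^2(\Omega)$ with $\partial_1\Gamma=0$, $\partial_2\Gamma\in K\ups$ a.e., pick $R\in SO(2)$ with $Re_1=\overline{\Lambda(g,\Gamma)}$ (any $R\in SO(2)$ if $\Lambda(g,\Gamma)=0$) and set $u:=R(\mathrm{id}+\Gamma e_1)$; by~\eqref{eq:A-alt1} this $u$ lies in $\Acal\ups$, has $\Gamma_u=\Gamma$, and satisfies $I_g\ups(u)=J_g\ups(\Gamma)$. These two facts show that $\inf_{\Ycal}I_g\ups=\inf J_g\ups$ over the relevant domain (the values on the complement of the domain being $+\infty$ on both sides), that a minimizer of $J_g\ups$ lifts to a minimizer of $I_g\ups$, and that the $\Gamma$-component of any minimizer of $I_g\ups$ minimizes $J_g\ups$ while $R_u$ obeys~\eqref{eq:char-Gammav}. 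Third, since existence of minimizers for $I_g\ups$ was already established via the direct method at the beginning of the section, the equality $\min_{\Ycal}I_g\ups=\min J_g\ups$ follows, and in particular $J_g\ups$ attains its minimum.

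One should also check that $\Gamma\mapsto\Lambda(g,\Gamma)$, hence $\Gamma\mapsto|\Lambda(g,\Gamma)|$, is well defined and that $J_g\ups$ is not identically $+\infty$: the former because $\Lambda(g,\Gamma)=\widehat g+\int_\Omega\Gamma g\dd x$ is manifestly finite for $\Gamma\in L^2(\Omega)$ and $g\in L^2(\Omega;\R^2)$, the latter because $\Gamma\equiv0$ satisfies the constraints ($0\in K\ups$ in every case by~\eqref{eq:def-K}). I anticipate no serious obstacle here: the argument is essentially a fibered (partial) minimization over the compact group $SO(2)$, and the only mild subtlety is bookkeeping the case distinction $\Lambda(g,\Gamma_u)=0$ versus $\neq0$ consistently in both directions of the ``if and only if'', together with being careful that when $\Lambda(g,\Gamma_u)=0$ the rotation $R_u$ is genuinely unconstrained — which is exactly why the conclusion is phrased as ``either the rotation or the shear coefficient'' is determined rather than both. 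The somewhat delicate point, if any, is simply to make sure the lifting construction in the reverse direction really produces an element of $\Ycal$ (zero average), which is automatic since $u=R(\mathrm{id}+\Gamma e_1)$ with $\int_\Omega x\dd x=0$ by~\eqref{barycenter_Omega} and $\int_\Omega\Gamma\dd x=0$ by $\Gamma\in L_0^2(\Omega)$, so $\int_\Omega u\dd x=R(0+0)=0$.
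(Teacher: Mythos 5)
Your proof is correct and follows essentially the same route as the paper: both hinge on the identity $I_g\ups(u)=J_g\ups(\Gamma_u)-\Lambda(g,\Gamma_u)\cdot R_ue_1+|\Lambda(g,\Gamma_u)|$ coming from~\eqref{144} and on optimizing the rotation via the elementary bound $\Lambda(g,\Gamma)\cdot R e_1\le|\Lambda(g,\Gamma)|$, which yields~\eqref{eq:char-Gammav} and the correspondence of minimizers. The only organizational difference is that you obtain attainment of $\min J_g\ups$ by transferring it from the minimizers of $I_g\ups$ established earlier in the section, whereas the paper runs a separate direct-method argument for $J_g\ups$ (including a coercivity estimate for $s=e_1$); both are legitimate.
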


\begin{proof}
First, we verify via a simple application of the direct method that minimizers of $J_g\ups$ exist. To check for coercivity, we argue in the case $s\neq e_1$ that the domain of $J_g\ups$ is a bounded subset of $W^{1,2}(\Omega)$ and thus, precompact in the weak topology of $W^{1,2}(\Omega)$. If $s=e_1$,
consider a minimizing sequence $(\Gamma_n)_n$ for $J_g^{(e_1)}$ such that for every $n\in \N$,
$$J_g^{(e_1)}(\Gamma_n)\leq \inf_{\Gamma\in W^{1,2}(\Omega)\cap L_0^2(\Omega)}J_g^{(e_1)}(\Gamma)+\frac1n,$$
and hence, $\partial_1\Gamma_n=0$ and
$$\frac{1}{\lambda} \int_{\Omega} ( \partial_2 \Gamma_n)^2\dd{x} 
\leq - |\Lambda(g, 0)| + \frac{1}{n} +  |\widehat g| + \norm{\Gamma_n}_{L^2(\Omega)}\norm{g}_{L^2(\Omega;\R^2)},$$
so that $(\Gamma_n)_n$ is bounded in $W^{1,2}(\Omega)$ by the Poincar\'e-Wirtinger inequality, and therefore has a weakly converging subsequence in $W^{1,2}(\Omega)$. 
Finally, the existence of a minimizer follows as the functionals $J_g\ups$ are all lower semicontinuous with respect to the weak topology in $W^{1,2}(\Omega)$. 

To prove the statement, we start with the preliminary observation that by~\eqref{144}, 
\begin{equation}
\label{eq:I-J}
I_g\ups(u)=J_g\ups(\Gamma_u)-\Lambda(g,\Gamma_u)\cdot R_u e_1+|\Lambda(g,\Gamma_u)|
\end{equation} 
for every $u\in \Acal\ups$ with $u = R_u({\rm id}+\Gamma_u e_1)$, 
see~\eqref{eq:A-alt1}. 

Now, let $u^\ast$ be a minimizer of $I_g\ups$ in $\Acal\ups$ with $u^\ast = R^\ast({\rm id}+\Gamma^\ast e_1)$. If we consider $u=R({\rm id}+\Gamma^\ast e_1)$ for different $R\in SO(2)$, then~\eqref{eq:char-Gammav} is a direct consequence of the optimality of $u^\ast$ and of \eqref{eq:I-J}. Choosing a competitor $u=R^\ast({\rm id}+\Gamma e_1)$, we infer from the minimality of $u^\ast$ for $I_g\ups$ in combination with \eqref{eq:I-J} that $\Gamma^\ast$ is minimal for $J_g\ups$.

Conversely, let $\Gamma^\ast$ be a minimizer of $J_g\ups$ and let $R^\ast$ be such that \eqref{eq:char-Gammav} holds true. Then, the minimality of the map $u^\ast = R^\ast({\rm id}+\Gamma^\ast e_1)$ for $I_g\ups$ follows once again from \eqref{eq:I-J}.
\end{proof}

\begin{remark}\label{rem:uniqueness}
a) Notice that while the values of the energy functionals $I_g\ups$ depend quadratically and linearly on $\gamma_v$, the dependence of $J_g\ups$ on its domain involves a combination of convex and concave terms. 

b) In the case $s=e_2$, the situation is particularly simple. As the  class of admissible functions does not include any shear deformations and is thus very restrictive, the problem comes down to analyzing rigid-body motion in response to external loading, or in other words, to discussing the standard model of nonlinear rigid elasticity.
To be precise, since $\gamma_u=0$ and $\Gamma_u=0$ for any $u\in \Acal\upee$, and therefore, $\Lambda(g, \Gamma_u)=\widehat{g}$, 
we conclude from Lemma~\ref{lem:representation} that $u$ is a minimizer of $I_g\upee$ if and only if
\begin{align*}
\begin{cases}
R_ue_1=\widehat{g}/|\widehat{g}| & \text{if $\widehat g\neq 0$,}\\
R_u\in SO(2) & \text {otherwise.}
\end{cases}
\end{align*}
\end{remark}

In the case of non-unique rotations for minimizers of $I_g\ups$, we can prove a partial uniqueness result for the shears $\Gamma$. In combination with Lemma~\ref{lem:representation}, it shows that at least one of the building blocks of a minimizer, that is, rotation or shear, is uniquely determined.

\begin{proposition}\label{prop:uniqueness}
Let $u,w\in \Acal\ups$ be two minimizers of $I\ups_g$. If $\Lambda(g, \Gamma_w)=0$, then $\gamma_u=\gamma_w$, or equivalently, $\Gamma_u=\Gamma_w$. 
\end{proposition}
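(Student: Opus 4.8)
The plan is to exploit the mixed convex--concave structure of $J_g\ups$ identified in Remark~\ref{rem:uniqueness}(a), together with the hypothesis $\Lambda(g,\Gamma_w)=0$, which kills the concave contribution along the relevant comparison. By Lemma~\ref{lem:representation}, both $\Gamma_u$ and $\Gamma_w$ are minimizers of $J_g\ups$, so $J_g\ups(\Gamma_u)=J_g\ups(\Gamma_w)=:\mu$. The idea is to test the convexity of the quadratic-plus-linear part of $J_g\ups$ against the concavity of the term $-|\Lambda(g,\cdot)|$, and then observe that at $\Gamma_w$ the latter term attains its maximal value $0$ (since $|\Lambda(g,\Gamma)|\ge 0$ always and $\Lambda(g,\Gamma_w)=0$), so it cannot help a midpoint competitor beat the minimum.

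Concretely, first I would record that the admissible set $\{\Gamma\in W^{1,2}(\Omega)\cap L^2_0(\Omega): \partial_1\Gamma=0,\ \partial_2\Gamma\in K\ups\text{ a.e.}\}$ is convex (it is the intersection of a linear subspace with the convex constraint $\partial_2\Gamma\in K\ups$, as $K\ups$ is an interval). Hence $\Gamma_t:=(1-t)\Gamma_w+t\Gamma_u$ is admissible for every $t\in[0,1]$, and $\Gamma\mapsto\Lambda(g,\Gamma)=\widehat g+\int_\Omega\Gamma g\dd x$ is affine, so $\Lambda(g,\Gamma_t)=(1-t)\Lambda(g,\Gamma_w)+t\Lambda(g,\Gamma_u)=t\Lambda(g,\Gamma_u)$ using $\Lambda(g,\Gamma_w)=0$; thus $|\Lambda(g,\Gamma_t)|=t|\Lambda(g,\Gamma_u)|$. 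Next I would expand, using strict convexity of $\xi\mapsto\xi^2$ and linearity of $\Gamma\mapsto\int_\Omega\partial_2\Gamma\dd x$,
\begin{align*}
\frac{s_1^2}{\lambda}\int_\Omega(\partial_2\Gamma_t)^2\dd x-2s_1s_2\int_\Omega\partial_2\Gamma_t\dd x
\le (1-t)\Bigl(\tfrac{s_1^2}{\lambda}\!\int_\Omega(\partial_2\Gamma_w)^2\dd x-2s_1s_2\!\int_\Omega\partial_2\Gamma_w\dd x\Bigr)
+t\Bigl(\tfrac{s_1^2}{\lambda}\!\int_\Omega(\partial_2\Gamma_u)^2\dd x-2s_1s_2\!\int_\Omega\partial_2\Gamma_u\dd x\Bigr),
\end{align*}
with equality if and only if $\partial_2\Gamma_w=\partial_2\Gamma_u$ a.e. (when $s_1\neq0$). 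Combining with the computed value of $|\Lambda(g,\Gamma_t)|$ and adding $-|\Lambda(g,\Gamma_w)|=0$ appropriately, I would obtain $J_g\ups(\Gamma_t)\le(1-t)J_g\ups(\Gamma_w)+tJ_g\ups(\Gamma_u)=\mu$, with strict inequality unless $\partial_2\Gamma_w=\partial_2\Gamma_u$. Since $\mu$ is the minimum, the inequality cannot be strict, forcing $\partial_2\Gamma_u=\partial_2\Gamma_w$ a.e.; together with $\partial_1\Gamma_u=\partial_1\Gamma_w=0$ and the zero-average normalization (both lie in $L^2_0(\Omega)$), Poincaré--Wirtinger gives $\Gamma_u=\Gamma_w$, hence $\gamma_u=\partial_2\Gamma_u=\partial_2\Gamma_w=\gamma_w$.

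The case $s=e_1$ needs no separate treatment since $s_1=1\neq0$ there; the case $s=e_2$ is vacuous ($\gamma\equiv 0$). The only genuinely delicate point is making sure the concave term does not spoil the strict inequality: this is exactly where the hypothesis $\Lambda(g,\Gamma_w)=0$ is used, since it makes $t\mapsto-|\Lambda(g,\Gamma_t)|=-t|\Lambda(g,\Gamma_u)|$ \emph{affine} rather than strictly concave along the segment, so the strict convexity of the quadratic term is not cancelled. I expect this to be the main obstacle to watch, but it is handled cleanly by the affineness observation above.
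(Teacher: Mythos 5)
Your proof is correct. At its core it is the same convexity mechanism as the paper's: interpolate between the two minimizers and observe that, thanks to $\Lambda(g,\Gamma_w)=0$, the only term that could spoil convexity contributes affinely along the segment, so the strict convexity of the quadratic shear energy forces $\gamma_u=\gamma_w$ (with the prefactor $s_1^2/\lambda$, and the case $s_1=0$ being trivial since $K\upee=\{0\}$). The difference is purely in where the interpolation is performed: you pass through Lemma~\ref{lem:representation} and work with the reduced functional $J_g\ups$ on the convex set $\{\partial_1\Gamma=0,\ \partial_2\Gamma\in K\ups\}$, using that $\Gamma\mapsto\Lambda(g,\Gamma)$ is affine so that $|\Lambda(g,\Gamma_t)|=t\,|\Lambda(g,\Gamma_u)|$; the paper instead stays at the level of deformations and builds the competitor $z_\delta=(1-\delta)u+\delta R_uR_w^Tw$, i.e.\ it first conjugates $w$ by $R_uR_w^T$ so that the convex combination remains in $\Acal\ups$ with rotation $R_u$ and shear $(1-\delta)\gamma_u+\delta\gamma_w$, and then checks that the load term $\int_\Omega g\cdot(R_uR_w^Tw-w)\dd x=\Lambda(g,\Gamma_w)\cdot(R_u-R_w)e_1$ vanishes by hypothesis. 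Your route has the small advantage of not having to handle the rotations at all (they are already factored out by Lemma~\ref{lem:representation}), at the price of invoking that lemma and the convexity of the constraint set; the paper's construction is self-contained and makes explicit how the non-convexity coming from $SO(2)$ is neutralized. Your final step (from $\partial_2\Gamma_u=\partial_2\Gamma_w$, $\partial_1\Gamma_u=\partial_1\Gamma_w=0$ and the zero-mean normalization to $\Gamma_u=\Gamma_w$) is also fine.
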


\begin{proof}
For $\delta\in (0,1)$, consider the map $z_\delta:=(1-\delta)u+\delta R_{u}R_{w}^Tw$, which satisfies $z_\delta\in \Acal\ups$ with $R_{z_\delta}=R_u$ and $\gamma_{z_\delta}= (1-\delta)\gamma_u + \delta \gamma_w$, cf.~\eqref{eq:def-A}.  Then,
\begin{align*}
0\leq I\ups_g(z_\delta) - (1-\delta)I\ups_g(u) - \delta I\ups_g(w) 
= (\delta^2-\delta) \frac{s_1^2}{\lambda} \int_{\Omega} (\gamma_u-\gamma_w)^2\dd x - \delta \int_{\Omega} g\cdot (R_uR_w^Tw-w)\dd{x}. 
\end{align*} 
We observe that $R_uR_w^Tw-w= (R_u-R_w)({\rm id}+\Gamma_w e_1)$, so that 
\begin{align*}
\int_\Omega g\cdot (R_uR_w^T-w)\dd{x} & = \Bigl(\int_\Omega x_1g(x) -x_2g^\perp(x) + \Gamma_w(x) g(x)\dd{x}\Bigr)\cdot (R_u-R_w)e_1 \\ & = \Lambda(g, \Gamma_w)\cdot (R_ue_1-R_we_1)=0,
\end{align*}
in view of the assumption $\Lambda(g, \Lambda_w)=0$.
Thus, 
\begin{align}\label{inequ1}
0\geq  \frac{s_1^2}{\lambda} \int_{\Omega} (\gamma_u-\gamma_w)^2\dd x\geq 0, 
\end{align}
which shows that $\gamma_u=\gamma_w$. 
\end{proof}

In what follows, we specify our discussion to the case of a square-shaped reference configuration $\Omega=(-1,1)^2$, which allows for a refined analysis. Indeed, in this setting, one can decouple the two spatial variables and view $\Gamma_u$ and $\gamma_u$ for $u\in \Acal\ups$ as functions of one variable, i.e., as elements of $W^{1,2}(-1,1)$ and $L^2(-1,1)$, respectively. Moreover,
\begin{align*}
\Lambda(g, \Gamma) = \widehat{g}  + \int_{-1}^1 \Gamma(x_2) [g]_{x_1}(x_2) \dd{x_2}\quad \text{and}\quad \widehat{g} = \int_{-1}^1 x_1[g]_{x_2}(x_1)\dd{x_1} - \int_{-1}^1x_2 [g]_{x_1}^\perp(x_2) \dd{x_2}
\end{align*}
with $[g]_{x_1} :=\int_{-1}^1 g(x_1, \cdot)\dd{x_1}$ and $[g]_{x_2}:=\int_{-1}^1 g(\cdot, x_2)\dd{x_2}$. 
 \begin{example}[Examples of \boldmath{$\widehat{g}$ for $\Omega=(-1,1)^2$}]\label{ex:gtilde}
  
 a) If $g$ describes linear loading, i.e., $g(x):=Ax+b$ for $x\in \Omega$ with given (non-trivial) $A\in \R^{2\times 2}$ and $b\in \R^2$, a direct calculation shows that
  $$\widehat{g}= \frac{4}{3} \Big(\begin{array}{c}\tr A \\A_{12}-A_{21}\end{array}\Big).$$ 
If $A$ is symmetric with non-zero trace, then $\widehat{g}/|\widehat{g}|=e_1$, whereas $\widehat{g}/|\widehat{g}| = e_2$ for a skew-symmetric $A$. 
 
 b) When $g$ is constant, then trivially, $\widehat g=0$. In that case, also $\Lambda(g, \Gamma)=0$ for any admissible $\Gamma$ due to the fact that $\Gamma\in L^2_0(-1,1)$ has vanishing mean value. 
Besides, in light of the property that $\Omega$ has its barycenter in the origin, $\widehat{g}=0$ if $g$ is of the form 
\begin{align*}
g(x)=\sum_{i=1}^N A_i  \left(\begin{array}{c} x_1^{\alpha_i}x_2^{\beta_i}\\x_1^{\gamma_i}x_2^{\delta_i}\end{array}\right), \quad x\in \Omega,
\end{align*}
with $A_i\in \R^{2\times 2}$ and $\alpha_i,\beta_i, \gamma_i,\delta_i$ odd integers for $i=1,\dots,N\in \mathbb{N}$. 
 \end{example}

Under special assumptions on the loads or the slip systems, we can identify further conditions on minimizers of $J_g\ups$, and hence also of $I_g\ups$, as the next results illustrate.  
\begin{proposition}\label{prop:g1const} 
Let $\Omega=(-1,1)^2$ and suppose that $[g]_{x_1}$ is constant. If $\Gamma \in W^{1,2}(-1,1)\cap L_0^2(-1,1)$ is a minimizer of $J_g\ups$, then $\Gamma=0$. Every minimizer of  $I_g\ups$ is a rigid-body motion. 
\end{proposition}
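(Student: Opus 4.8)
The plan is to use Lemma~\ref{lem:representation} to reduce everything to the scalar problem for $J_g\ups$, and then show that when $[g]_{x_1}$ is constant the functional $J_g\ups$ is \emph{convex}, which forces the minimizer to be unique and, by a symmetry argument, equal to $0$. First I would observe that since $[g]_{x_1}$ is constant and $\Gamma\in L^2_0(-1,1)$ has zero mean, the term $\int_{-1}^1 \Gamma(x_2)[g]_{x_1}(x_2)\dd{x_2}$ vanishes (exactly as in Example~\ref{ex:gtilde}b)), so that $\Lambda(g,\Gamma)=\widehat g$ is a fixed vector independent of $\Gamma$. Consequently the concave/problematic term $-|\Lambda(g,\Gamma)|$ in $J_g\ups$ becomes the \emph{constant} $-|\widehat g|$, and on its (convex) domain $\{\Gamma: \partial_1\Gamma=0,\ \partial_2\Gamma\in K\ups\}$ the functional reduces to
\begin{align*}
J_g\ups(\Gamma) = \frac{s_1^2}{\lambda}\int_\Omega (\partial_2\Gamma)^2\dd{x} - 2s_1s_2\int_\Omega \partial_2\Gamma\dd{x} - |\widehat g|,
\end{align*}
which is convex in $\Gamma$ (strictly convex in $\partial_2\Gamma$ when $s_1\neq 0$, i.e.\ $s\neq e_2$; the case $s=e_2$ is trivial since then $\Gamma\equiv 0$ anyway).

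Next I would identify the minimizer explicitly. Writing $\gamma=\partial_2\Gamma$, we are minimizing $\frac{s_1^2}{\lambda}\int\gamma^2 - 2s_1s_2\int\gamma$ over $\gamma\in L^2(-1,1)$ with $\gamma\in K\ups$ a.e., subject to the zero-mean constraint $\int_{-1}^1\Gamma=0$ (note $\Gamma$ is recovered from $\gamma$ up to a constant fixed by this constraint, so the constraint does not couple across points and can be ignored for the minimization over $\gamma$). The integrand is minimized pointwise by the constant $\gamma^\ast = \mathrm{proj}_{K\ups}(\lambda s_2/s_1)$. One then checks that in each of the relevant cases this projection gives $\gamma^\ast=0$: indeed, for $s\notin\{e_1,e_2\}$ the interval $K\ups$ has one endpoint $0$ and the other endpoint $-2\lambda s_1/s_2$, and the unconstrained optimum $\lambda s_2/s_1$ lies on the opposite side of $0$ from $-2\lambda s_1/s_2$ (since $\lambda s_2/s_1$ and $-2\lambda s_1/s_2$ always have opposite signs, because $s_2/s_1$ and $-s_1/s_2$ have opposite signs), so the projection onto $K\ups$ is $0$; for $s=e_1$ one has $s_2=0$, $K\upe=\R$, and the optimum is $\gamma^\ast=0$ directly; for $s=e_2$ trivially $\gamma^\ast=0$. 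Hence $\partial_2\Gamma=0$, and combined with $\partial_1\Gamma=0$ and $\int_\Omega\Gamma=0$ we get $\Gamma=0$. Since $J_g\ups$ is convex with strictly convex dependence on $\partial_2\Gamma$ (when $s\neq e_2$), this minimizer is the unique one; when $s=e_2$ uniqueness is automatic.

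For the final sentence, I would invoke Lemma~\ref{lem:representation}: if $u\in\Acal\ups$ minimizes $I_g\ups$ then $\Gamma_u$ minimizes $J_g\ups$, hence $\Gamma_u=0$, and therefore $u = R_u(\mathrm{id}+\Gamma_u e_1) = R_u\,\mathrm{id}$ with $R_u\in SO(2)$ determined by~\eqref{eq:char-Gammav} (with $\Lambda(g,\Gamma_u)=\widehat g$); in particular $\nabla u = R_u\in SO(2)$ is constant, so $u$ is a rigid-body motion (rotation about the barycenter, which by~\eqref{barycenter_Omega} is the origin). I expect the only mildly delicate point to be the sign bookkeeping in the projection-onto-$K\ups$ argument — verifying that for $s\notin\{e_1,e_2\}$ the unconstrained minimizer $\lambda s_2/s_1$ always lands on the side of $0$ away from the other endpoint of $K\ups$, so that clamping yields exactly $0$ — but this follows at once from the case split $s_1s_2<0$ versus $s_1s_2>0$ in the definition~\eqref{eq:def-K} of $K\ups$.
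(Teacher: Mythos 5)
Your proposal is correct and follows essentially the same route as the paper: since $[g]_{x_1}$ is constant and $\Gamma$ has zero mean, $\Lambda(g,\Gamma)=\widehat g$ is fixed, so $J_g\ups$ reduces to a strictly convex quadratic in $\partial_2\Gamma$ whose constrained minimizer over $K\ups$ is $0$ because the vertex $\lambda s_2/s_1$ lies on the opposite side of $0$ from $K\ups$, and the rigid-body conclusion then follows from Lemma~\ref{lem:representation}. The only cosmetic difference is that you minimize pointwise (projection of the vertex onto $K\ups$) where the paper invokes Jensen's inequality to reduce to the same scalar problem.
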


\begin{proof}
The case $s=e_2$ is covered in Remark~\ref{rem:uniqueness}. Assume now that $s\neq e_2$. If $[g]_{x_1}$ is constant, then $\Lambda(g, \Gamma)=\widehat g$, and thus, $\Lambda(g, \Gamma)$ is independent of $\Gamma$. The functional $J_g\ups$ is then strictly convex with a unique minimizer $\Gamma$ satisfying 
\begin{align*}
\Gamma'\in {\rm argmin}_{\gamma\in L^2(-1,1;K\ups)} \int_{-1}^1  \frac{s_1^2}{\lambda}\gamma^2 - 2s_1s_2\gamma \dd{x_2}, 
\end{align*}
where $\Gamma'$ denotes the weak derivative of $\Gamma$. 
By Jensen's inequality, this implies that $\Gamma'$ is constant with  
\begin{align}\label{174}
\Gamma'\in {\rm argmin}_{\eta\in K\ups} \frac{s_1^2}{\lambda}\eta^2 -2s_1s_2\eta = \{0\};
\end{align}
for the last identity, we 
make use of the fact that the vertex of the parabola in~\eqref{174}, that is, $\eta=\frac{s_2}{s_1}\lambda$, does not lie in $K\ups$ for $s\neq e_1$.  

Summing up, we have shown that $\Gamma'=0$, and therefore $\Gamma=0$.
The second statement is then an immediate consequence of Lemma~\ref{lem:representation}. 
\end{proof}

For $s=e_1$, we have the following necessary condition for minimizers of $J_g\upe$.

\begin{proposition}\label{prop:necessary_Gamma}
Let $\Omega=(-1,1)^2$, and let $\Gamma\in W^{1,2}(-1,1)\cap L_0^2(-1,1)$ be a critical point of $J\upe_g$. 

a) If $\Lambda(g, \Gamma)= 0$, then $\Gamma=0$.

b) If $\Lambda(g, \Gamma)\neq 0$, the differential equation
\begin{align}\label{phi_prime}
\Gamma''= -\frac{\lambda}{4} \overline{\Lambda(g, \Gamma)}\cdot \Bigl([g]_{x_1} - \frac{1}{2}\int_{-1}^1 [g]_{x_1} \dd{x_2}\Bigr)
\end{align}
holds in the sense of distributions. 
In particular, $\Gamma\in W^{2,2}(-1,1)$.
\end{proposition}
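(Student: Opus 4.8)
The plan is to exploit the special structure of $\Acal^{(e_1)}$, where $K^{(e_1)}=\R$ imposes no pointwise constraint on $\partial_2\Gamma$, so that critical points of $J_g^{(e_1)}$ can be computed by a genuine first variation. Since $\Omega=(-1,1)^2$, we view $\Gamma\in W^{1,2}(-1,1)\cap L_0^2(-1,1)$ as a function of $x_2$ alone. For $s=e_1$ we have $s_1=1$, $s_2=0$, so $J_g^{(e_1)}(\Gamma)=\frac1\lambda\int_{-1}^1(\Gamma')^2\dd{x_2}-|\Lambda(g,\Gamma)|$, recalling $\Lambda(g,\Gamma)=\widehat g+\int_{-1}^1\Gamma(x_2)[g]_{x_1}(x_2)\dd{x_2}$. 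The first step is to take an arbitrary test function $\varphi\in C_c^\infty(-1,1)$, note that $\Gamma+t\varphi$ need not have zero mean, so instead use $\psi:=\varphi-\frac12\int_{-1}^1\varphi\dd{x_2}$ (which lies in $L_0^2(-1,1)$ and has $\psi'=\varphi'$), and compute $\frac{d}{dt}\big|_{t=0}J_g^{(e_1)}(\Gamma+t\psi)$.

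For part (b), where $\Lambda(g,\Gamma)\neq 0$, the map $\Gamma\mapsto|\Lambda(g,\Gamma)|$ is differentiable near $\Gamma$ with derivative in direction $\psi$ equal to $\overline{\Lambda(g,\Gamma)}\cdot\int_{-1}^1\psi[g]_{x_1}\dd{x_2}$, since $\Lambda$ is affine in $\Gamma$ and the norm is smooth away from the origin. Setting the full first variation to zero gives $\frac2\lambda\int_{-1}^1\Gamma'\varphi'\dd{x_2}=\overline{\Lambda(g,\Gamma)}\cdot\int_{-1}^1\big(\varphi-\tfrac12\int_{-1}^1\varphi\big)[g]_{x_1}\dd{x_2}$ for all $\varphi\in C_c^\infty(-1,1)$; rewriting the right-hand side as $\int_{-1}^1\varphi\,\overline{\Lambda(g,\Gamma)}\cdot\big([g]_{x_1}-\tfrac12\int_{-1}^1[g]_{x_1}\big)\dd{x_2}$ identifies $-\frac2\lambda\Gamma''$ in the distributional sense with that expression, which is exactly~\eqref{phi_prime} after dividing by $-2/\lambda$. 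Since the right-hand side of~\eqref{phi_prime} is an $L^2(-1,1)$ function, $\Gamma''\in L^2(-1,1)$, hence $\Gamma\in W^{2,2}(-1,1)$.

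For part (a), where $\Lambda(g,\Gamma)=0$, the functional near $\Gamma$ behaves like $\frac1\lambda\int(\Gamma')^2-|\Lambda(g,\Gamma+\psi)|$, and because $t\mapsto\Lambda(g,\Gamma+t\psi)$ passes through $0$ at $t=0$, the term $-|\Lambda(g,\Gamma+t\psi)|$ has a local \emph{maximum} at $t=0$ along any direction, contributing nothing differentiable of first order (or a one-sided kink that only makes the critical-point condition easier). More carefully, for a critical point one still needs the Gâteaux derivative of the convex part to vanish against all admissible $\psi$: $\frac2\lambda\int_{-1}^1\Gamma'\varphi'\dd{x_2}=0$ for all $\varphi\in C_c^\infty(-1,1)$, since the only possible first-order contribution from $-|\Lambda|$ is $-|\overline{\Lambda}\cdot(\dots)|\,|t|$-type and a critical point must in particular be stationary in the directions $\pm\psi$ for which this vanishes, forcing $\Gamma''=0$; combined with $\Gamma\in L_0^2(-1,1)$ and the homogeneous Neumann-type behavior at the endpoints (from the free boundary in the first variation), this yields $\Gamma$ affine with zero mean, and then $\Gamma$ constant $=0$. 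I expect the main obstacle to be making the notion of ``critical point'' precise when $|\Lambda(g,\Gamma)|=0$, i.e.~handling the non-smoothness of the norm at the origin: one must argue either that a critical point is defined via one-sided derivatives in all admissible directions, or directly that stationarity of the smooth convex part alone already forces $\Gamma'=\text{const}$, and then use the mean-zero constraint to kill the constant. The rest is a routine integration-by-parts bookkeeping, taking care that test perturbations respect the $L_0^2$ constraint.
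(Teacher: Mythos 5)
Your overall route --- a first variation within the admissible class, mean-corrected test perturbations, differentiability of $\Gamma\mapsto|\Lambda(g,\Gamma)|$ away from $\Lambda=0$ because $\Lambda$ is affine in $\Gamma$, and $W^{2,2}$-regularity read off from the $L^2$ right-hand side --- is the same as the paper's, but two steps do not hold up as written. First, a constant in b) is off: for $\Gamma=\Gamma(x_2)$ on $\Omega=(-1,1)^2$ one has $\frac{1}{\lambda}\int_\Omega(\partial_2\Gamma)^2\dd{x}=\frac{2}{\lambda}\int_{-1}^1(\Gamma')^2\dd{x_2}$ (the $x_1$-integration contributes a factor $2$), not $\frac{1}{\lambda}\int_{-1}^1(\Gamma')^2\dd{x_2}$ as you wrote. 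With your normalization the weak equation reads $-\frac{2}{\lambda}\Gamma''=\overline{\Lambda(g,\Gamma)}\cdot\bigl([g]_{x_1}-\tfrac12\int_{-1}^1[g]_{x_1}\dd{x_2}\bigr)$, i.e.\ $\Gamma''=-\frac{\lambda}{2}\,\overline{\Lambda(g,\Gamma)}\cdot(\cdots)$, which does not coincide with \eqref{phi_prime}; restoring the factor $2$ (so that the first variation of the quadratic part is $\frac{4}{\lambda}\int_{-1}^1\Gamma'\varphi'\dd{x_2}$) produces the stated coefficient $-\lambda/4$.

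Second, part a) is incomplete as argued. Testing only with $\psi=\varphi-\tfrac12\int_{-1}^1\varphi\dd{x_2}$, $\varphi\in C_c^\infty(-1,1)$, stationarity of the quadratic part gives $\int_{-1}^1\Gamma'\varphi'\dd{x_2}=0$, hence only $\Gamma''=0$; together with the zero mean this leaves $\Gamma(x_2)=c\,x_2$, and the linear profile is not excluded. The ``homogeneous Neumann condition at the endpoints'' you invoke cannot be extracted from compactly supported variations: to kill $c$ you must also test with admissible perturbations that do not vanish at $x_2=\pm1$ (for instance $\Psi(x_2)=x_2$), handling the kink of $-|\Lambda(g,\Gamma+\delta\Psi)|=-|\delta|\,\bigl|\int_{-1}^1\Psi[g]_{x_1}\dd{x_2}\bigr|$ by the same one-sided $\pm$-direction argument you sketch, which then forces $c=0$. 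The paper sidesteps this entirely by a different choice of variation in a): it perturbs $\Gamma$ by $\delta\Psi$ where $\Psi$ is the zero-mean primitive of an arbitrary $\psi\in C_c^\infty(-1,1)$, so the variation acts directly on the derivative, $\Gamma_\delta'=\Gamma'+\delta\psi$, and stationarity yields $\int_{-1}^1\Gamma'\psi\dd{x_2}=0$ for all such $\psi$, hence $\Gamma'=0$ a.e.\ and then $\Gamma=0$ from the zero mean, with no boundary condition needed. Your treatment of the nonsmoothness of the norm at $\Lambda=0$ via one-sided derivatives in the directions $\pm\psi$ is sound, and is in fact what makes the formally written cancellation in the paper's computation rigorous; the gap is only that, with your test class, what you derive does not yet imply $\Gamma=0$.
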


\begin{proof}
We first prove a). Let 
 $\psi\in C_c^\infty(-1,1)$. 
For any $\delta\in \R$, we define the variation 
\begin{align*}
\Gamma_\delta = \Gamma + \delta \Psi,
\end{align*}
where $\Psi$ is the primitive of $\psi$ with vanishing mean value. 
To obtain optimality conditions for the minimizer $\Gamma$, we calculate that 
\begin{align*}
0=\frac{d}{d\delta}_{|_{\delta=0}} J_g\upe(\Gamma_\delta) & = \frac{d}{d\delta}_{|_{\delta=0}} \frac{2}{\lambda} \int_{-1}^1 (\Gamma'+\delta \psi)^2   \dd{x_2} -|\Lambda(g, \Gamma+\delta \Psi)|\\ 
&  = 
 \int_{-1}^1  \frac{4}{\lambda} \Gamma'\psi \dd{x_2} -  \Bigl(\int_{-1}^1[g]_{x_1}\Psi\dd{x_2}\Bigr)  \cdot \overline{\Lambda(g, \Gamma)}=  \int_{-1}^1  \frac{4}{\lambda} \Gamma'\psi \dd{x_2}.
\end{align*}
This implies that $\Gamma'=0$ and concludes the proof in view of the fact that $\Gamma\in L^2_0(-1,1)$.

Next, we prove b).  For any $\Psi\in C^\infty_c(-1,1)$, consider the variation 
\begin{align*}
\Gamma_\delta = \Gamma+\delta \Bigl(\Psi-\frac{1}{2}\int_{-1}^1 \Psi \dd{x_2}\Bigr).
\end{align*}
The claim then follows from the computation
\begin{align*}
0=\frac{d}{d\delta}_{|_{\delta=0}} J_g\upe(\Gamma_\delta) & = \frac{d}{d\delta}_{|_{\delta=0}} \frac{2}{\lambda}  \int_{-1}^1 (\Gamma'+\delta \Psi')^2 \dd{x_2} - \Big|\Lambda\Big(g, \Gamma+\delta \Bigl(\Psi-\frac{1}{2}\int_{-1}^1 \Psi\dd{x_2}\Bigr)\Big)\Big|\\ 
&= 
 \int_{-1}^1  \frac{4}{\lambda} \Gamma'\Psi' \dd{x_2}  -  \Bigl(\int_{-1}^1[g]_{x_1}\cdot \Bigl(\Psi-\frac{1}{2}\int_{-1}^1 \Psi\dd{x_2}\Bigr)\dd{x_2}\Bigr)\cdot \overline{\Lambda(g, \Gamma)} \\ 
 & \displaystyle =  \int_{-1}^1 \frac{4}{\lambda} \Gamma'\Psi -\overline{\Lambda(g, \Gamma)}\cdot \Bigl([g]_{x_1}-\tfrac{1}{2} \int_{-1}^1 [g]_{x_1}\dd{x_2}\Bigr) \Psi\dd{x_2}. 
\end{align*}
\end{proof}

\begin{remark}
Note that case a) can only occur in a scenario where $\widehat g=0$.
If  $[g]_{x_1}$ is constant in addition, one concludes from Proposition~\ref{prop:necessary_Gamma} the necessary condition $\Gamma''=0$ for critical points of $J_g\upe$, which is in agreement with the characterization of minimizers in Proposition~\ref{prop:g1const}.
\end{remark}

\begin{example}[Affine loads] We discuss solutions to \eqref{phi_prime} in the special case of affine loads $g(x) = Ax+b$ for $x\in \Omega=(-1,1)^2$ with $A
\in \R^{2\times 2}$ and $b\in \R^2$. Then, 
\begin{align*}
[g]_{x_1}(x_2) &=2x_2 Ae_2 +2 b \quad \text{for $x_2\in (-1,1)$},\\
[g]_{x_2}(x_1) &=2x_1Ae_1 + 2b \quad \text{for $x_1\in (-1,1)$,}
\end{align*}
so that $[g]_{x_1} - \frac{1}{2} \int_{-1}^1 [g]_{x_1} \dd{x_2}  = 2x_2 Ae_2$, 
$\widehat g = \frac{4}{3}(Ae_1-(Ae_2)^\perp)$ and 
$$\Lambda(g,\Gamma)=\frac{4}{3}(Ae_1-(Ae_2)^\perp)+2Ae_2\int_{-1}^1{x_2\Gamma(x_2)}\dd{x_2}.$$
In view of \eqref{phi_prime}, any critical point for $J_g\upe$ is a polynomial of third order, i.e.,
\begin{center}
$\Gamma(x_2)=\alpha x_2^3+\beta x_2^2+\gamma x_2+\delta$ with coefficients $\alpha, \beta, \gamma, \delta\in \R$. 
\end{center} 
From $\int_{-1}^1 \Gamma(x_2)\dd{x_2}=0$, we infer that $\delta=-\frac{\beta}{3}$.
Since the right-hand side of~\eqref{phi_prime} has null average in $(-1,1)$, we conclude that $\beta=0$ and thus, 
\begin{align*}
\Gamma(x_2)=\alpha x_2^3+\gamma x_2.
\end{align*}
By plugging this structure into the expression of $J_g^{(e_1)}$, the problem of finding minimizers for $J_g^{(e_1)}$  reduces to the following two-dimensional optimization problem:
$$\min_{(\alpha,\gamma)\in \R^2} \frac{4}{\lambda}\Bigl(\frac{9}{5}\alpha^2 +\gamma^2+2\alpha\gamma\Bigr)- \frac{4}{3}\Bigl| Ae_1-(Ae_2)^\perp+Ae_2\Bigl(\frac{3}{5}\alpha+\gamma\Bigr)\Bigr|.$$
\end{example}

\section{Homogenization via evolutionary $\Gamma$-convergence} 
\label{sec:evol}
Before stating our findings on evolutionary $\Gamma$-convergence, we introduce the necessary terminology and recall a few definitions and abstract results from~\cite{MRS08, MiR15}, adjusted to our setting, for a self-contained presentation and the readers' convenience. Let $\Qcal=\Ycal\times \Zcal$, where $\Ycal, \Zcal$ are reflexive, separable 
Banach spaces endowed with the weak topology and $T>0$. We write $q=(y, z)\in \Qcal=\Ycal\times \Zcal$. 
Further, take an energy functional  
$\Ecal: [0, T]\times \Qcal\to \R_\infty:=\R\cup\{\infty\}$ of the form
 \begin{align}\label{Etq}
\Ecal(t, q):=  E(q)  - \CCC \langle l(t), y\rangle_{\Ycal', \Ycal},\BBB
 \end{align} 
where $E:\Qcal\to \R_\infty$ and \CCC $l\in W^{1,1}(0, T; \Ycal')$, \BBB
 and let  
 $\Dcal:\Zcal\times \Zcal\to [0, +\infty]$ be a dissipation distance, i.e., $\Dcal$ is definite and satisfies the triangle inequality.
  A triple $(\Qcal, \Ecal, \Dcal)$ of such a state space, energy and dissipation functional is referred to as an \CCC (energetic) \BBB rate-independent system.

A process $q:[0, T]\to \Qcal$ is called an energetic solution of the rate-independent system $(\Qcal, \Ecal, \Dcal)$
if the global stability condition, i.e., $q(t)\in \Scal(t)$ with 
 \begin{align}\label{globalstability}
\Scal(t) :=\{q\in \Qcal: \Ecal(t, q(t))< +\infty \ \text{and} \ 
\Ecal(t, q)\leq \Ecal(t, \tilde q) +  \Dcal(z, \tilde z) \text{\  \ for all $\tilde q=(\tilde y, \tilde z)\in \Qcal$}\},
 \end{align}
 and the energy balance
\begin{align}\label{energybalance}
\Ecal(t, q(t)) + {\rm Diss}_{\Dcal}(z; [0, t])=  \Ecal(0, q(0)) - \int_0^t \langle \dot{l}(\tau), y(\tau)\rangle \, \dd\tau
\end{align}
hold for all $t\in[0, T]$, see \cite[Definition~2.1.2]{MiR15}; here, $\dot{l}$ denotes the weak derivative of $l$ with respect to time, and for $q=(y,z)\in \Qcal$,
\begin{align*}
{\rm Diss}_{\Dcal}(z; [0, t]):= \sup\Bigl\{\sum_{i=1}^N \Dcal(z(t_{j-1}), z(t_j)): N\in \N, 0=t_0<t_1, \ldots, t_N=t\Bigr\}. 
\end{align*}

\CCC Now, consider \BBB energy and dissipation functionals $\Ecal_\eps$, $\Dcal_\eps$ with parameter $\eps>0$ and $\Ecal_0, \Dcal_0$ as introduced above \CCC (the same notations carry over as well, indicated by subscript $\eps$ and $0$, respectively). \BBB We say that the family $(\Qcal, \Ecal_\eps, \Dcal_\eps)_\eps$ evolutionary $\Gamma$-converges (with well-prepared initial conditions) to $(\Qcal, \Ecal_0, \Dcal_0)$ as $\eps\to 0$, formally 
\begin{align}\label{evGamma}
 (\Qcal, \Ecal_\eps, \Dcal_\eps) \xrightarrow{ev\text{-}\Gamma}
 (\Qcal, \Ecal_0, \Dcal_0)\quad \text{as $\eps\to 0$}, 
\end{align}
if energetic solutions to $(\Qcal, \Ecal_\eps, \Dcal_\eps)$ exist and if the limits of such solutions for $\eps\to 0$ are energetic solutions to $(\Qcal, \Ecal_0, \Dcal_0)$, along with suitable convergences of the energetic and the dissipated contributions; to be precise, if $q_\eps:[0, T] \to \Qcal$ are energetic solutions to $(\Qcal, \Ecal_\eps, \Dcal_\eps)$ such that $q_\eps(t)\to q(t)$ in $\Qcal$ for all $t\in [0,T]$ and $\Ecal_{\eps}(0, q_\eps(0))\to \Ecal_0(0, q(0))$ for $\eps\to 0$, then $q:[0, T]\to \Qcal$ is an energetic solution for $(\Qcal, \Ecal_0, \Dcal_0)$, and it holds that 
\begin{align*}
 \Ecal_\eps(t, q_\eps(t)) \to \Ecal_0(t, q(t)) \quad \text{and}\quad 
{\rm Diss}_{\Dcal_\eps}(z_\eps; [0, t])\to {\rm Diss}_{\Dcal_0}(z; [0, t]) \quad  &\text{for all $t\in [0, T]$,}\\ 
 \langle \dot{l}_\eps(t), y_\eps(t)\rangle \to \langle \dot{l}_0(t), y(t)\rangle  \quad &\text{for a.e.~$t\in [0,T]$.}
\end{align*}

If energetic solutions to $(\Qcal, \Ecal_\eps, \Dcal_\eps)$ do not exist, one needs to employ 
 a refined concept. In fact, we can fall back on the following related approach, based on approximate incremental problems, which automatically admit solutions.   Based on~\cite[Section~4]{MRS08} on the relaxation of evolutionary problems, we state a generalized version suitable for parameter-dependent families of energies and dissipations, as mentioned also in~\cite[Section~2.5.1]{MiR15}.
In the following, we say that $(\Qcal, \Ecal_\eps, \Dcal_\eps)_\eps$ approximately evolutionary $\Gamma$-converges to $(\Qcal, \Ecal_0, \Dcal_0)$ 
and write 
\begin{align}\label{evGammaAIP}
(\Qcal, \Ecal_\eps, \Dcal_\eps)\xrightarrow{ev\text{-}\Gamma_{\text{app}}} 
 (\Qcal, \Ecal_0, \Dcal_0) \quad \text{as $\eps\to 0$,}
\end{align}
\CCC if \BBB sequences of piecewise constant interpolants of approximate solutions to time-incremental problems for $(\Qcal, \Ecal_\eps, \Dcal_\eps)$ have subsequences that converge to energetic solutions of $(\Qcal, \Ecal_0, \Dcal_0)$ in a suitable sense. To be precise: 
For $\eps>0$, let 
\begin{align*}
\Tcal_\eps=\{0=\tau_\eps^{(0)}<\tau_\eps^{(1)}<\ldots <\tau_\eps^{(N_\eps-1)}< \tau_\eps^{(N_\eps)}=T\}
\end{align*} 
with $N_\eps\in \N$, be a family of partitions of $[0, T]$ with fineness $\nu(\Tcal_\eps) = \max_{k=1, \ldots, N_\eps}\tau_\eps^{(k)}-\tau_\eps^{(k-1)}\to 0$ as $\eps\to 0$, and initial conditions $q_\eps^{(0)}\in \Qcal$ with $q_\eps^{(0)}\to q^{(0)}$ in $\Qcal$ and $\Ecal_{\eps}(0, q_\eps^{(0)})\to \Ecal_0(0, q^{(0)})\in \R$ as $\eps\to 0$. 
Consider the piecewise constant functions $q_\eps:[0, T]\to \Qcal$ 
on the partition $\Tcal_\eps$ with 
\begin{align}\label{approxincremental}
q_\eps(t) = \sum_{k=1}^{N_\eps} q_\eps^{(k)} \mathbbm{1}_{[\tau_\eps^{(k-1)}, \tau_{\eps}^k)},
\end{align}
where $q_\eps^{(k)}\in \Qcal$ for $k=1, \ldots, N_\eps$ are iteratively determined solutions to the approximate incremental problem 
\begin{align*}
\Ecal_\eps(\tau_\eps^{(k)}, q_\eps^{(k)}) + \Dcal_\eps(q_{\eps}^{(k-1)}, q_\eps^{(k)}) \leq   \inf_{\tilde q\in \Qcal}\big[\Ecal_\eps(\tau_\eps^{(k)}, \tilde q) + \Dcal_\eps(q_{\eps}^{(k-1)}, \tilde q) \big]+ \nu(\Tcal_\eps)  \delta_\eps^{(k)}; 
\end{align*} 
here, $\delta^{(k)}_\eps>0$ satisfies $\sup_{\eps>0}\sum_{k=1}^{N_\eps}\delta^{(k)}_\eps<+\infty$.
Then,~\eqref{evGammaAIP} means that there exists a subsequence of $(q_\eps)_\eps$ (not relabeled) and an energetic solution $q:[0, T]\to \Qcal$ for $(\Qcal, \Ecal_0, \Dcal_0)$ such that  
\begin{align*}
z_\eps(t)\to z(t) \quad\text{in $\Zcal$}\quad & \text{for all $t\in [0,T]$, } \\ 
\quad \Ecal_\eps(t, q_\eps(t)) \to \Ecal_0(t, q(t)) \quad \text{and}\quad 
{\rm Diss}_{\Dcal_\eps}(z_\eps; [0, t])\to {\rm Diss}_{\Dcal_0}(z; [0, t])\quad &\text{for all $t\in [0,T]$, }\\
 \langle \dot{l}_\eps(t), y_\eps(t)\rangle \to \langle \dot{l}_0(t), y(t)\rangle \quad &\text{for a.e.~$t\in [0,T]$. }
\end{align*}

Next, we collect a list of conditions on $(\Qcal, \Ecal_\eps, \Dcal_\eps)_\eps$ and $(\Qcal, \Ecal_0, \Dcal_0)$, which
 have been shown to provide sufficient criteria for~\eqref{evGamma} and~\eqref{evGammaAIP}: 
\begin{itemize}
\item[(H1)] $\Dcal_\eps=\Dcal$ for  all $\eps>0$, where $\Dcal:\Zcal\times \Zcal\to [0, +\infty]$ is a lower-semicontinuous quasi-distance, i.e., definite and satisfying the triangle inequality; \\[-0.4cm] \CCC
 \item[(H2)] \CCC $E_\eps:\Qcal\to \R_\infty$ are lower semicontinuous for all $\eps>0$;\BBB

\item[(H3)] \CCC $E_\eps(q)\geq C\norm{q}_\Qcal^\alpha -c$ 
 for all $q\in \Qcal$, $\eps>0$ with $C, c>0$ and some $\alpha>1$, and \CCC$l_\eps \to l_0$ in $W^{1,1}(0, T;\Ycal')$; \BBB \\[-0.4cm]
 \item[(H4)] \CCC $(E_\eps)_\eps$ $\Gamma$-converges to $E_0$ with respect to the topology of $\Qcal$; 
 \\[-0.4cm] 
\item[(H5)] \CCC  if $(t_\eps, q_\eps)_\eps$ is a stable sequence, that is, $q_\eps\in \Scal_\eps(t_\eps)$ for all $\eps>0$ and
$ {\rm sup}_{\eps>0}\, \Ecal_\eps(t_\eps, q_\eps)<+\infty$, such that $(t_\eps, q_\eps)\to (t,q)$ in $[0, T]\times \Qcal$, then $q\in \Scal_0(t)$. \BBB
\end{itemize}

The above-listed hypotheses are specializations of the more general assumptions in~\cite[Sections~2.4.2,~2.5.1]{MiR15}, 
taylored to the setting relevant for this work. 
In fact, as a corollary of~\cite[Theorem~2.4.10]{MiR15}, the evolutionary $\Gamma$-convergence~\eqref{evGamma} follows if  
\CCC (H1)--(H5)
hold \CCC and $(\Qcal, \Ecal_\eps, \Dcal_\eps)$ admit energetic solutions.

The proof is based on a by now classical time-discretization strategy. 
If \CCC (H1), (H3), (H4) and (H5) hold, then~\eqref{evGammaAIP} can be considered a consequence of~\cite[Theorem~2.5.1]{MiR15} or \cite[Theorem~4.1]{MRS08}. Strictly speaking, the latter references write the statement explicitly only for relaxation, so assuming that $\Ecal_\eps$ are all identical for all $\eps$, but as mentioned there already, the results carry over to general families of energy functionals after a straightforward modification.  
Both for~\eqref{evGamma} and~\eqref{evGammaAIP}, one obtains that the limit solution $q:[0, T]\to\Qcal$ is measurable.

When it comes to verifying the hypotheses, the critical condition to check is (H5). 
A sufficient condition 
  is the existence of a mutual recovery sequence, cf.~\cite[(2.4.13), Proposition~2.4.8\,(ii), Lemma~2.1.14]{MiR15}:
\begin{itemize}
\item[(H6)] for any sequence $(t_\eps, q_\eps)_\eps \subset [0, T]\times \Qcal$ with
$ {\rm sup}_{\eps>0}\, \Ecal_\eps(t_\eps, q_\eps)<+\infty$ 
that converges to $(t,q)\in [0, T]\times \Qcal$ and any $\tilde q\in \Qcal$, there exists a sequence $(\tilde q_\eps)_\eps\subset \Qcal$ such that $\tilde q_\eps\to q$ in $\Qcal$ and 
\begin{align}\label{general_mutual}
\limsup_{\eps\to 0} \, \Ecal_\eps(t_\eps,\tilde{q}_\eps) + \Dcal_\eps(q_\eps, \tilde{q}_\eps)  - \Ecal_\eps(t_\eps, q_\eps) \leq \Ecal_0(t,\tilde{q}) + \Dcal_0(q,\tilde{q})- \Ecal_0(t, q). 
\end{align}
\end{itemize} 

Now, after this brief excursion into the theory of evolutionary convergence for rate-independent systems, we discuss two different scenarios of such results associated with the 
plasticity model introduced previously in Section~\ref{sec:min-static}. The key part of the proofs is the construction of mutual recovery sequences, where the main difficulty lies in accommodating the non-convex constraints in order to obtain admissible states. A full picture of evolutionary $\Gamma$-convergence in our homogenization setting of crystal plasticity seems currently out of reach. We present in the following a few first steps by studying specific cases where energetic and dissipative effects decouple, see Theorem~\ref{theo:evol2} and Theorem~\ref{theo:evol1} below. 

Let us first introduce the general setting, fix notation and provide some preliminaries.
It is assumed in the remainder of this section that $\Omega=(-1,1)^2$ and $s\in \{e_1, e_2\}$. Let $\Qcal = \Ycal \times \Zcal$ with $\Ycal=W^{1,2}(\Omega;\R^2)\cap L^2_0(\Omega;\R^2)$ and $\Zcal = L^2(\Omega)$, where both spaces are equipped with the corresponding weak topologies. We write $q=(u, \gamma)\in \Qcal$.  

In what follows, we consider dissipation distances $\Dcal:\Qcal\times \Qcal  \to [0, +\infty]$ given for every $q=(u,\gamma),\,\tilde{q}=(\tilde{u},\tilde{\gamma})\in \Qcal$ by either 
\begin{align}\label{D1}
&
\Dcal(q, \tilde q)=\Dcal^1(q, \tilde{q}) := \delta \int_{\Omega} |\gamma-\tilde{\gamma}|\dd{x}  = \delta \norm{\gamma-\tilde \gamma}_{L^1(\Omega)}
\end{align}
with a dissipation coefficient $\delta>0$, or by 
\begin{align}\label{Dgeq}
\Dcal(q, \tilde q) =\Dcal_{\geq}(q, \tilde{q}) := \begin{cases} \Dcal^1(q, \tilde{q}) & \text{if $\gamma\geq \tilde{\gamma}$ a.e.~in $\Omega$,} \\ +\infty & \text{otherwise;}  \end{cases} 
\end{align} 
note that the second choice of dissipation incorporates a monotonicity assumption on the direction of plastic glide, making for a unidirectional process. 
The underlying energy functionals $\Ecal_\eps: [0, T]\times \Qcal \to \R_\infty$ for $\eps> 0$ associated to our system are
 \begin{align}\label{Eeps_Gamma}
 \Ecal_\eps(t, q) := E_\eps(q) - \int_\Omega g_\eps(t)\cdot u \dd x,
 \end{align}
where $g_\eps\in W^{1,1}(0, T; L^2(\Omega;\R^2))$
are given body forces and $E_\eps:\Qcal\to [0,+\infty]$ time-independent energy contributions; the latter will be specified below in each subsection, but share the common property that their admissible states are contained in the sets
\begin{align}\label{Be2eps}
\begin{split}
\Bcal_{\ep}^{(s)} & :=  \{q=(u, \gamma) \in \Qcal: 
\nabla u= R(\Ibb+\gamma s\otimes m), R\in L^\infty(\Omega;SO(2)), \gamma = 0 \text{ on $\eps\Yrig$}\}\\
& = \{q=(u, \gamma_u)\in \Qcal: u\in \Acal^{(s)}_\ep\},
\end{split}
\end{align}
recalling $\Acal_{\eps}^{(s)}$ from~\eqref{eq:def-Aep}, i.e.,
\begin{align*}
\Acal_{\ep}^{(s)} & =  \{u\in \Ycal: 
\nabla u= R_u(\Ibb+\gamma_u s\otimes m), R_u\in L^\infty(\Omega;SO(2)), \gamma_u = 0 \text{ on $\eps\Yrig$}\}. 
\end{align*}
As it was shown in~\cite{ChK17}, the limits of sequences of in $\Bcal^{(s)}_\eps$ can be characterized via 
\begin{align}\label{Bcal_s}
\begin{split}
\Bcal^{(s)} &:=\{q\in \Qcal:  q_\eps\to q \text{ in $\Qcal$},\  q_\eps\in  \Bcal^{(s)}_\eps \text{ for all $\eps>0$} \} \\ 
&= \{(u, \gamma_u)\in \Qcal: u\in \Acal^{(s)}\}, 
\end{split}
\end{align}
with $\Acal^{(s)}$ as defined in~\eqref{eq:def-A}. 

\subsection{The case \boldmath{$s=e_2$}}
For $\eps>0$, consider the energy functional $\Ecal_\eps:[0, T]\times \Qcal\to \R_\infty$ as in~\eqref{Eeps_Gamma} with
\begin{align}\label{Eeps}
E_\eps(q) :=\begin{cases} \displaystyle \int_\Omega \gamma^2\dd{x} & \text{for $q=(u, \gamma) \in \Bcal_\eps^{(e_2)},$}\\ +\infty & \text{otherwise,} 
\end{cases} 
\qquad \text{for $q\in \Qcal,$}
\end{align}
where $\Bcal^{(e_2)}_\eps$ is given in~\eqref{Be2eps}.
The next theorem shows that passing to the limit $\eps\to 0$ in the dissipative system $(\Qcal, \Ecal_\eps, \Dcal)$ yields a purely energetic evolution without any dissipation, see Remark~\ref{rem:no_dissipation}. 
\CCC We point out that 
the energy functionals $E_\eps$ are not lower semicontinuous in light of oscillating rotations, and observe that the oscillations in the shear strains prevent the continuous convergence of the dissipation.
 
As a consequence, 
the existence of energetic solutions to $(\Qcal, \Ecal_\eps, \Dcal)$ is not guaranteed, which suggests to formulate the result within the framework of approximate $\Gamma$-convergence.
Regarding the proof strategy, note
 that classical constructions of mutual recovery sequences such as the ``quadratic trick" (see e.g.~\cite[Section 3.4.5]{MiR15}) cannot be applied here due to the nonlinearity of the problem.  \BBB  

\begin{theorem}[{Evolutionary \boldmath{$\Gamma$}-convergence for \boldmath{$s=e_2$}}]\label{theo:evol2}
Let $\Ecal_\eps$ for $\eps>0$ be as in ~\eqref{Eeps_Gamma} and~\eqref{Eeps}, and let $\Dcal$ be as in~\eqref{D1} or~\eqref{Dgeq}. Further, suppose  \color{black} that $g_\eps \to g_0$ in $W^{1,1}(0, T;L^2(\Omega;\R^2))$ with $g_0\in W^{1,1}(0,T; L^2(\Omega;\R^2))$. \BBB
Then, \begin{align*}
(\Qcal, \Ecal_{\eps}, \Dcal) &\xrightarrow{ev\text{-}\Gamma_{\text{\rm app}}} (\Qcal, \Ecal_0, \Dcal)\quad \text{as $\eps\to 0$,} 
\end{align*}
where 
\begin{align}\label{eq:E0}
\Ecal_0(t, q) :=\begin{cases}\displaystyle  - \int_\Omega g_0(t)\cdot Rx \dd x & \text{for $q=(u,0)\in \Qcal$ with $\nabla u=R$ for $R\in SO(2)$, } \\ +\infty & \text{otherwise.} \end{cases}
\end{align}
\end{theorem}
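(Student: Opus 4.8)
The plan is to verify hypotheses (H1), (H3), (H4), (H5) of the abstract framework for the family $(\Qcal,\Ecal_\eps,\Dcal)$ and limit $(\Qcal,\Ecal_0,\Dcal)$, and then invoke the abstract result yielding $\eqref{evGammaAIP}$. Hypothesis (H1) is immediate since $\Dcal$ is independent of $\eps$ and, being (a monotone restriction of) the $L^1$-seminorm of the difference of shear variables, is a lower-semicontinuous quasi-distance on $\Zcal=L^2(\Omega)$ with its weak topology; for $\Dcal_\geq$ one also uses that the set $\{\gamma\geq\tilde\gamma\text{ a.e.}\}$ is weakly closed. For (H3), coercivity of $E_\eps$ in $\|q\|_\Qcal=\|u\|_{W^{1,2}}+\|\gamma\|_{L^2}$ follows from $E_\eps(q)=\|\gamma\|_{L^2(\Omega)}^2$ on $\Bcal_\eps^{(e_2)}$ together with the fact that on $\Acal_\eps^{(e_2)}$ one has $\gamma=0$ (since $s=e_2$, $m=s^\perp=-e_1$, so $\nabla u=R$ and $\gamma$ plays no role) — more precisely $\nabla u=R\in SO(2)$ pointwise gives $\|\nabla u\|_{L^2}^2=2|\Omega|$, and the zero-average constraint plus Poincaré controls $\|u\|_{L^2}$; the convergence $g_\eps\to g_0$ in $W^{1,1}(0,T;L^2)$ is the assumed loading convergence (identifying $l_\eps(t)=g_\eps(t)$ via the $L^2$ pairing).

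The substantive steps are (H4) and (H5). For (H4), I would show $\Gamma$-convergence of $E_\eps$ to
\[
E_0(q)=\begin{cases} 0 & q=(u,0),\ \nabla u=R\in SO(2),\\ +\infty & \text{otherwise},\end{cases}
\]
(so that $\Ecal_0(t,q)=E_0(q)-\int_\Omega g_0(t)\cdot u$ with $u=Rx$ by the zero-average normalization and $\eqref{barycenter_Omega}$). The $\liminf$ inequality: if $q_\eps\to q$ in $\Qcal$ with $\sup_\eps E_\eps(q_\eps)<\infty$, then $q_\eps\in\Bcal_\eps^{(e_2)}$, so by $\eqref{Bcal_s}$ the limit satisfies $q=(u,\gamma_u)$ with $u\in\Acal^{(e_2)}$, hence $\gamma_u=0$ and $\nabla u=R\in SO(2)$ (constant, by $\eqref{eq:def-A}$–$\eqref{eq:def-K}$ with $K^{(e_2)}=\{0\}$); thus $E_0(q)=0\le\liminf E_\eps(q_\eps)$. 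The $\limsup$/recovery inequality: given $q=(u,0)$ with $\nabla u=R\in SO(2)$, note this $q$ already lies in $\Bcal_\eps^{(e_2)}$ for every $\eps$ (constant rotation, zero shear), so the constant sequence $q_\eps=q$ works with $E_\eps(q_\eps)=0=E_0(q)$; if $q$ is not of this form, $E_0(q)=+\infty$ and nothing is to prove.

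Hypothesis (H5) is the main obstacle, and the real content is the construction of a mutual recovery sequence, i.e. verifying (H6), which implies (H5) via the cited framework. Given a stable sequence $(t_\eps,q_\eps)\to(t,q)$ with bounded energies (so $q_\eps=(u_\eps,\gamma_\eps)\in\Bcal_\eps^{(e_2)}$, and $q=(u,0)$, $\nabla u=R\in SO(2)$) and a test state $\tilde q=(\tilde u,\tilde\gamma)\in\Qcal$, I need $\tilde q_\eps\to\tilde q$ with the $\limsup$ inequality $\eqref{general_mutual}$. The right-hand side is $+\infty$ unless $\tilde q=(\tilde u,0)$ with $\nabla\tilde u=\tilde R\in SO(2)$ and, for $\Dcal_\geq$, $0=\gamma\ge\tilde\gamma=0$; so it suffices to treat $\tilde q=(\tilde u,0)$, $\nabla\tilde u=\tilde R$, in which case $\tilde q\in\Bcal_\eps^{(e_2)}$ already and one takes the \emph{constant} sequence $\tilde q_\eps=\tilde q$. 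Then $\Dcal_\eps(q_\eps,\tilde q_\eps)=\delta\|\gamma_\eps-0\|_{L^1}=\delta\|\gamma_\eps\|_{L^1}$, which need \emph{not} tend to $0$ (this is precisely the "loss/persistence of dissipation" phenomenon, and why one cannot hope for continuous convergence of $\Dcal$); but here on the right-hand side $\Dcal_0(q,\tilde q)=\delta\|\gamma-\tilde\gamma\|_{L^1}=0$, so I must instead absorb the uncontrolled $\Dcal_\eps$ term into $\Ecal_\eps(t_\eps,q_\eps)$ by \emph{modifying} $q_\eps$ rather than $\tilde q$ — i.e. the mutual recovery sequence should be built around $\tilde q$ using the rotation structure of $q_\eps$: set $\tilde R_\eps:=$ a measurable rotation field interpolating, and crucially choose $\tilde q_\eps=(\tilde u_\eps,\tilde\gamma_\eps)$ with $\tilde\gamma_\eps$ chosen so that $|\gamma_\eps-\tilde\gamma_\eps|$ is small where it matters. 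The clean way: since $E_0$ forces $\tilde\gamma=0$ and $E_\eps$ allows any shear vanishing on $\eps\Yrig$, put $\tilde\gamma_\eps:=\gamma_\eps$ and $\tilde u_\eps$ the corresponding deformation with $\nabla\tilde u_\eps=\tilde R(\Ibb+\gamma_\eps e_2\otimes m)$ where $\tilde R$ is chosen (possibly $\eps$-dependent, converging weakly to $\tilde R$) to make $\tilde u_\eps\to\tilde u$ in $\Qcal$; then $\Dcal_\eps(q_\eps,\tilde q_\eps)=0$, $\Ecal_\eps(t_\eps,\tilde q_\eps)-\Ecal_\eps(t_\eps,q_\eps)=-\int_\Omega g_\eps(t_\eps)\cdot(\tilde u_\eps-u_\eps)\to-\int_\Omega g_0(t)\cdot(\tilde u-u)=\Ecal_0(t,\tilde q)-\Ecal_0(t,q)$, giving $\eqref{general_mutual}$ with equality in the limit. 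The delicate point is engineering $\tilde R$ (and handling the zero-average constraint on $\tilde u_\eps$, which forces an $\eps$-small translation correction) so that $\tilde u_\eps\to\tilde u$ strongly in $L^2$ and weakly in $W^{1,2}$; this is where the special geometry of the layered problem — and the fact that for $s=e_2$ the gradient has the rigid form $R+(\text{shear along }e_2\otimes e_1)$ with the shear contributing only a weakly-null oscillation — must be exploited, exactly as the remark before the theorem anticipates. Finally, once (H1), (H3), (H4), (H5) are in hand, the abstract result cited after (H5) (the version of \cite[Theorem~2.5.1]{MiR15}/\cite[Theorem~4.1]{MRS08} for parameter-dependent families) yields $\eqref{evGammaAIP}$ with limit energy $\Ecal_0$ as displayed, together with the stated convergences of energies, dissipations, and power, and measurability of $q$.
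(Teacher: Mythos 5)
Your overall strategy is the paper's: verify (H1), (H3), (H4) and then (H5) via a mutual recovery sequence as in (H6), and your key structural insight --- keep the shear unchanged, $\tilde{\gamma}_\eps=\gamma_\eps$, so that $\Dcal(q_\eps,\tilde q_\eps)=0$ and $E_\eps(\tilde q_\eps)=E_\eps(q_\eps)$ cancel, leaving only the loading terms --- is exactly the right one. However, the actual construction of $\tilde u_\eps$, which is the only delicate point of the proof, is left incomplete, and the candidate you sketch does not work as written. Prescribing $\nabla\tilde u_\eps=\tilde R(\Ibb+\gamma_\eps\, e_2\otimes m)$ with a \emph{constant} rotation $\tilde R$ is in general not admissible: for $s=e_2$ the curl-free condition for such a field forces $\partial_2\gamma_\eps=0$, which is incompatible with $\gamma_\eps$ vanishing on the rigid layers $\eps Y_{\rm rig}\cap\Omega$ unless $\gamma_\eps\equiv 0$; this is precisely why elements of $\Acal^{(e_2)}_\eps$ carry $x$-dependent rotation fields. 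Your hedge that one should choose $\tilde R$ ``possibly $\eps$-dependent'' and exploit the layered geometry names the difficulty but does not resolve it. The missing (one-line) device in the paper is to set $\tilde u_\eps:=\tilde R R^T u_\eps$, i.e.\ to post-compose the given deformation with the constant rotation $\tilde R R^T$: then $\nabla\tilde u_\eps=\tilde R R^T R_\eps(x)(\Ibb+\gamma_\eps\, e_2\otimes m)$ is automatically a gradient, lies in $\Acal_\eps^{(e_2)}$ with the same shear $\gamma_\eps$, preserves the zero-average normalization, and satisfies $\tilde u_\eps\weakly \tilde R R^T u=\tilde u$ in $W^{1,2}$ (hence strongly in $L^2$ by compact embedding), after which \eqref{general_mutual} follows exactly as you compute from $g_\eps(t_\eps)\to g_0(t)$ in $L^2$.

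Two smaller points. In your (H3) discussion you assert that $\gamma=0$ on $\Acal_\eps^{(e_2)}$ and $\|\nabla u\|_{L^2}^2=2|\Omega|$; this confuses the $\eps$-level class (where $\gamma$ is free on the soft layers) with the limit class $\Acal^{(e_2)}$, where $K\upee=\{0\}$. Coercivity still holds, but via the correct identity $\|\nabla u\|_{L^2(\Omega;\R^{2\times 2})}^2=2|\Omega|+\|\gamma\|_{L^2(\Omega)}^2$ on $\Bcal^{(e_2)}_\eps$ together with Poincar\'e. Your direct verification of (H4) (liminf via the characterization \eqref{Bcal_s}, recovery by the constant sequence) is fine and only marginally different from the paper, which simply cites the $\Gamma$-convergence result of~\cite{ChK17}.
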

\begin{proof}
The statement follows, once \CCC (H1), (H3), (H4), (H5) 
\BBB are verified. The first \CCC two \BBB hypotheses are straightforward to check. \CCC Since $E_0$ given by $E_0(q)=0$ for $q=(0, Rx)$ with $R\in \SO(2)$ and $E_0(q)=+\infty$ otherwise 
was characterized as the $\Gamma$-limit of $(E_\eps)_\eps$ \BBB in~\cite{ChK17} (see also~\eqref{eq:defEeps}), (H4) is verified.
For the remaining condition (H5), we construct a suitable mutual recovery sequence according to (H6).

To this end, let $\tilde q\in \Qcal$ and 
 let 
 $(t_\eps, q_\eps)_\eps\subset [0, T]\times\Qcal$ be a sequence of uniformly bounded energy for
 $(\Qcal, \Ecal_\eps, \Dcal)_\eps$  
 converging to $(t, q)\in [0, T]\times \Qcal$. Since $\sup_{\eps>0}\Ecal_\eps(t_\eps, q_\eps)<+\infty$, we have $q_\eps=(u_\eps, \gamma_\eps)\in \Bcal^{(e_2)}_\eps$ for all $\eps$, and therefore, according to~\eqref{Bcal_s},
 \begin{center}
 $q=(u, \gamma)\in \Bcal^{(e_2)}$, \ or equivalently,\quad $u\in \Acal^{(e_2)}$ and $\gamma=\gamma_u$. 
 \end{center} 
This shows that $u$ is affine with $\nabla u=R$ for some $R\in SO(2)$ and that $\gamma_u=0$. Hence, 
$\Ecal_0(t, q)<+\infty$ in view of~\eqref{eq:E0}.
As we aim to prove the estimate~\eqref{general_mutual} for $\Dcal_\eps=\Dcal_0=\Dcal$, there is no loss of generality in assuming $\Ecal_0(t, \tilde q)<+\infty$, i.e., $\tilde q=(\tilde u, 0)\in \Qcal$ with $\tilde u$ affine such that $\nabla \tilde u =\tilde R$ for $\tilde R\in SO(2)$.

We set 
\begin{align*}
\tilde u_\eps=\tilde RR^T u_\eps \in \Acal_\eps^{(e_2)}
\end{align*}
and $\tilde q_\eps=(\tilde u_\eps, \gamma_\eps)\in \Bcal_{\eps}^{(e_2)}$ for any $\eps$. 
Then, $\tilde u_\eps\weakly \tilde{u}$ in $W^{1,2}(\Omega;\R^2)$, and also $\tilde q_\eps\to \tilde q$ in $\Qcal$. 
 Via compact Sobolev embedding, it holds (up to the selection of subsequences) that $\tilde u_\eps \to \tilde u$ in $L^2(\Omega;\R^2)$ and $u_\eps\to u$ in $L^2(\Omega;\R^2)$, which implies
\begin{align*}
\limsup_{\eps\to 0} - \int_\Omega g_\eps(t_\eps)\cdot (\tilde u_\eps - u_\eps)  \dd x =    - \int_\Omega g_0(t)\cdot (\tilde u - u) \dd x.
\end{align*}
In light of $E_\eps(q_\eps) = E_\eps(\tilde q_\eps)$, $\Dcal(q_\eps, \tilde q_\eps)=0$ due to $\tilde \gamma_\eps= \gamma_\eps$, and $\Dcal_0(q, \tilde q)=0$, we therefore obtain
\begin{align*}
\limsup_{\eps\to 0} \Ecal_\eps(t_\eps,\tilde{q}_\eps) - \Ecal_\eps(t_\eps, q_\eps) + \Dcal(q_\eps, \tilde{q}_\eps) = \Ecal_0(t,\tilde{q}) - \Ecal_0(t, q) + \Dcal_0(q,\tilde{q}).
\end{align*}
This yields~\eqref{general_mutual} and shows the existence of a mutual recovery sequence, as desired. 
\end{proof}

\begin{remark}\label{rem:no_dissipation}
Given the system $(\Qcal, \Ecal_0, \Dcal)$, we observe that passing from one admissible state with finite energy to the next does not cause any dissipation. This is because $\Dcal$ is zero when evalutated in finite-energy states and thus, the set of stable states for $(\Qcal, \Ecal_0, \Dcal)$ is the same as that of $(\Qcal, \Ecal_0, 0)$.  
Hence, one may think of the limit system $(\Qcal, \Ecal_0, \Dcal)$ as dissipation-free. 
\end{remark}

We conclude the case $s=e_2$ with a brief discussion of stable states and energetic solutions for the limit system $(\Qcal, \Ecal_0, \Dcal)$. 
Using Remark~\ref{rem:uniqueness}\,b), its set of stable states, denoted for time $t\in [0, T]$ by $\Scal_0(t)$, can be determined via minimization of $\Ecal_0(t, \cdot)$, that is, 
\begin{align*}
\Scal_0(t)  & = \{q\in \Qcal: \Ecal_0(t, q)\leq \Ecal_0(t, \tilde q)\ \text{for all $\tilde q\in \Qcal$}\}  
\\ 
& = \Bigl\{(u,0): u(x)=Rx \text{ for $x\in \Omega$}, R \in  {\rm argmin}_{S\in SO(2)}  -\int_{\Omega} g_0(t)\cdot Sx \dd x 
\Bigr\}\\ 
& =\begin{cases}
\{(Rx, 0): Re_1 = \widehat{g_0(t)}/|\widehat{g_0(t)}| \} & \text{ if $\widehat{g_0(t)}\neq 0$,}\\
\{(Rx, 0): R\in SO(2)\} & \text{ if $\widehat{g_0(t)}=0$};
\end{cases}
\end{align*}
recall~\eqref{hatg} for the definition of $\widehat{g_0(t)}$. In particular, $\Scal_0(t)$ contains only rigid-body motions, and 
the energy balance for $q(t, \cdot) = (u(t, \cdot), 0)\in \Scal_0(t)$ with $u(t)(x)=u(t, x)=R(t)x$ becomes 
\begin{align*}
-R(t)e_1\cdot \widehat{g_0(t)}= -R(0)e_1\cdot \widehat{g_0(0)} - \int_0^t R(\tau)e_1\cdot \widehat{\dot{g_0}(\tau)} \dd{\tau}
\end{align*}
for $t\in [0, T]$, cf.~\eqref{energybalance}.
Under the assumption that $\widehat{g_0(t)}\neq 0$ for all $t\in [0,T]$, we conclude that $(\Qcal, \Ecal_0, \Dcal)$ has a unique energetic solution, which is given by $q(t, \cdot)=(u(t, \cdot), 0)$ with 
\begin{align*}
u(t, x) = \bigl(x_1\widehat{g_0(t)} + x_2\widehat{g_0(t)}^\perp\bigr)/{|\widehat{g_0(t)}|}.
\end{align*}


\subsection{The case \boldmath{$s=e_1$}}

As energy functional $\Ecal_\eps:[0, T]\times \Qcal\to \R_\infty$ for $\eps>0$, we choose either
\begin{align}\label{Eeps1}
\Ecal_\eps(t, q)= \Ecal_\eps^{\rm rig}(t, q) :=\begin{cases} \displaystyle \int_\Omega \gamma^2\dd{x} - \int_{\Omega} g_\eps(t)\cdot u \dd{x} & \text{if $u\in \Acal_\eps^{(e_1), \rm rig},$}\\ +\infty & \text{otherwise,}
\end{cases}
\end{align}
or 
\begin{align}\label{Eeps2}
\Ecal_{\eps}(t,q)=\Ecal_{\eps, \tau_\eps}^{\rm reg}(t, q) 
:= \begin{cases} \displaystyle\int_{\Omega}\gamma^2 \dd{x} -\int_\Omega g_\ep(t)\cdot u\dd{x} + \tau_\eps \norm{\partial_1 \gamma}_{L^2(\Omega)}^2   & \text{if $u\in \Acal_\eps^{(e_1)}$,}\\ 
+\infty,  & \text{otherwise;}
\end{cases}
\end{align}
here, \begin{align*}
\Acal_\eps^{(e_1), \rm rig} := \{u\in \Acal_{\eps}^{(e_1)}: R_u = {\rm const.}\}
\end{align*} 
with $\Acal_\eps^{(e_1)}$ as in~\eqref{eq:def-Aep}, 
$\tau_\eps>0$ are given real constants such that $\tau_\eps\to +\infty$ as $\eps\to 0$, $g_\eps\in W^{1,1}(0, T; L^2(\Omega;\R^2))$ represent loading terms with \color{black} $g_\eps \to g_0$ in $W^{1,1}(0, T;L^2(\Omega;\R^2))$
as $\eps\to 0$.  \color{black}

We remark that, in the first situation with $\Ecal_{\eps}^{\rm rig}$, the inclusion 
\begin{align*} \Acal_\eps^{(e_1), \rm rig}\subset \Acal^{(e_1)} = \{u\in \Ycal: \nabla u=R(\Ibb+\gamma e_1\otimes e_2), R\in SO(2), \gamma\in L^2(\Omega), \partial_1 \gamma=0\}
\end{align*}
for any $\eps>0$, leads to an essentially one-dimensional model already at the level of the $\eps$-dependent functionals.  

The additional term in $\Ecal_{\eps, \tau_\eps}^{\rm reg}$ corresponds to a uni-directional regularization of the shears on the softer layers. As such, it
penalizes oscillations in the $x_1$-direction, but does not affect those in $x_2$, which are relevant to respect the layered structure. 
A common modeling choice for regularizing via higher-order derivatives in models of finite plasticity involves the dislocation tensor $\Gcal$, see e.g.~\cite{MM06}. Considering the multiplicative splitting of the deformation gradient 
into an elastic and a plastic part in~\eqref{multi_decomposition}, $\Gcal$ in the planar case is defined by
\begin{align*}
\Gcal(F_{\rm pl}) := \frac{1}{\det F_{\rm pl}} \curl F_{\rm pl}.
\end{align*}
In the present setting, with slip direction $s=e_1$ and slip plane normal $m=e_2$, it holds that $F_{\rm pl}=\Ibb +\gamma e_1\otimes e_2$ (see~\eqref{Fpl}), and therefore, 
\begin{align*}
\Gcal(F_{\rm pl})= \partial_1 (F_{\rm pl}e_2) - \partial_2 (F_{\rm pl} e_1) = (\partial_1 \gamma) e_1. 
\end{align*} 
Hence, the penalization in $\Ecal_{\eps, \tau_\eps}^{\rm reg}$ can be seen as forcing 
the $L^2$-norm of the dislocation tensor to asymptotically become infinitesimal and leads to a limit model in the context of dislocation-free finite crystal plasticity, as recently studied in~\cite{kruzik.melching.stefanelli, stefanelli}.

We present below two (approximate) evolutionary $\Gamma$-convergence results for the rate-independent systems with the previously introduced energies and dissipations. It is important to notice that both limit systems show no interaction between energy and dissipation terms. This effect is enforced by the rigid energy in $\Ecal_{\ep}^{\rm rig}$ and the regularization in $\Ecal_{\ep,\tau_\ep}^{\rm reg}$, which 
 suppress oscillations in the rotations and shears, respectively. On a technical level, these assumptions are designed to make a limit analysis based on weak-strong convergence arguments feasible. The case of fully oscillating rotations and shears, which requires characterizing limits of products of weakly convergent sequences, may become accessible with new arguments from the theory of compensated compactness (see e.g.~\cite{Mur81, Tar83}), 
  but is beyond the scope of this work and currently still open. 

\begin{theorem}[Evolutionary \boldmath{$\Gamma$}-convergence for \boldmath{$s=e_1$}]
\label{theo:evol1}
With the definitions in~\eqref{Eeps1}, \eqref{Eeps2},~\eqref{D1}, and~\eqref{Dgeq} and 
\begin{align*}
\Ecal_0(t, q) :=\begin{cases}\displaystyle \frac{1}{\lambda}\int_\Omega \gamma^2 \dd{x} - \int_\Omega \color{black} g_0(t) \BBB \cdot u\dd{x}& \text{if $u\in \Acal^{(e_1)},$} \\ +\infty & \text{otherwise,} \end{cases}
\end{align*}
the following evolutionary $\Gamma$-convergence results hold:
\begin{align*}
a)\ (\Qcal, \Ecal^{\rm rig}_{\eps}, \Dcal_{\geq}) & \xrightarrow{ev\text{-}\Gamma}(\Qcal, \Ecal_0, \Dcal_\geq) \ \text{as $\eps\to 0$};\\
b)\  (\Qcal, \Ecal^{\rm reg}_{\eps, \tau_\eps}, \Dcal^1) & \xrightarrow{ev\text{-}\Gamma_{\rm app}} (\Qcal, \Ecal_0, \Dcal^1)  \ \text{as $\eps\to 0$}. 
\end{align*}
\end{theorem}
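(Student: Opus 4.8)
The plan is to verify, in each of the two cases, the sufficient conditions from the abstract framework: for part a) the hypotheses (H1), (H3), (H4), (H5) that guarantee \eqref{evGamma}, and for part b) the hypotheses (H1), (H3), (H4), (H5) that guarantee \eqref{evGammaAIP}. In both cases (H1) holds by the explicit definition of $\Dcal_{\geq}$ resp. $\Dcal^1$ and their lower semicontinuity with respect to weak $L^2$-convergence; (H3) follows from the convexity/coercivity of $\int_\Omega \gamma^2\dd x$ together with the constraint encoded in $\Acal_\eps^{(e_1),\mathrm{rig}}$ resp. $\Acal_\eps^{(e_1)}$ (in case b) the term $\tau_\eps\norm{\partial_1\gamma}_{L^2}^2$ only helps), plus the assumed convergence $g_\eps\to g_0$ in $W^{1,1}(0,T;L^2)$; and (H4), the $\Gamma$-convergence of the stored energies, is essentially \cite[Theorem~1.1]{ChK17} in the formulation recalled in \eqref{Eups}, noting that for $s=e_1$ one has $K^{(e_1)}=\R$ and $E^{(e_1)}(u)=\frac1\lambda\int_\Omega\gamma_u^2\dd x$. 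For case a) the extra input is that restricting to constant rotations $R_u$ (i.e. to $\Acal_\eps^{(e_1),\mathrm{rig}}$) still $\Gamma$-converges to $E_0$ since the recovery sequences in \cite{ChK17} for $s=e_1$ can be taken with constant rotation; for case b) one checks that adding the penalization $\tau_\eps\norm{\partial_1\gamma}_{L^2}^2$ with $\tau_\eps\to+\infty$ does not destroy $\Gamma$-convergence, because admissible competitors already satisfy $\partial_1\gamma=0$ in the limit class $\Acal^{(e_1)}$ and recovery sequences can be chosen constant in $x_1$ on each soft layer, so the penalization term vanishes along them, while the $\Gamma$-liminf inequality is only reinforced.

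The heart of the proof is (H5), which I would obtain by constructing a mutual recovery sequence as in (H6). Fix a stable sequence $(t_\eps,q_\eps)$ with $\sup_\eps\Ecal_\eps(t_\eps,q_\eps)<+\infty$, converging to $(t,q)$, and a target $\tilde q=(\tilde u,\tilde\gamma)\in\Qcal$; by the uniform energy bound $q_\eps=(u_\eps,\gamma_\eps)\in\Bcal_\eps^{(e_1)}$ with $u_\eps\in\Acal_\eps^{(e_1),\mathrm{rig}}$ (case a) resp. $\Acal_\eps^{(e_1)}$ (case b), and $q=(u,\gamma_u)\in\Bcal^{(e_1)}$, i.e. $u\in\Acal^{(e_1)}$. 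There is no loss of generality in assuming $\Ecal_0(t,\tilde q)<+\infty$, so $\tilde u\in\Acal^{(e_1)}$, $\tilde u=\tilde R(\mathrm{id}+\tilde\Gamma e_1)$ with $\tilde R\in SO(2)$, $\partial_1\tilde\Gamma=0$, $\tilde\gamma=\partial_2\tilde\Gamma=\tilde\gamma_{\tilde u}$. The construction I would use takes $\tilde\gamma_\eps$ to be a layer-wise modification of $\tilde\gamma$ supported on $\eps\Ysoft$ designed to reproduce the macroscopic shear $\tilde\gamma$ in a weak sense while being admissible ($\tilde\gamma_\eps=0$ on $\eps\Yrig$); concretely, rescale $\tilde\gamma$ by $1/\lambda$ on the soft layers, mimicking the classical recovery sequence for $E^{(e_1)}$, and in case a) keep the rotation equal to the (constant) rotation $R_\eps$ of $u_\eps$ — composing $u_\eps$ with $\tilde R R_\eps^T$ is not available since $R_\eps$ need not converge to $\tilde R$; instead build $\tilde u_\eps$ directly from $\tilde R$ and the modified shear profile, which is legitimate because $\Acal_\eps^{(e_1),\mathrm{rig}}$ allows any constant rotation. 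For the dissipation: in case a), $\Dcal_\geq$ requires $\gamma_\eps\geq\tilde\gamma_\eps$ a.e.; this monotonicity must be arranged in the construction (truncating $\tilde\gamma_\eps$ from above by $\gamma_\eps$ where necessary, using that $\gamma\geq\tilde\gamma$ passes from the limit — this is exactly where the unidirectional structure is exploited), and then $\Dcal_\geq(q_\eps,\tilde q_\eps)=\delta\norm{\gamma_\eps-\tilde\gamma_\eps}_{L^1}\to\delta\norm{\gamma-\tilde\gamma}_{L^1}=\Dcal_\geq(q,\tilde q)$ by the strong $L^1$-convergence of the (explicitly constructed) sequences. In case b) one additionally checks $\tau_\eps\norm{\partial_1\tilde\gamma_\eps}_{L^2}^2\to0$, which holds if $\tilde\gamma_\eps$ is taken constant in $x_1$ on each soft strip so that $\partial_1\tilde\gamma_\eps=0$ identically, and $\tau_\eps\norm{\partial_1\gamma_\eps}_{L^2}^2$ is already controlled in $\Ecal_\eps(t_\eps,q_\eps)$; since this term appears with a minus sign (as $-\Ecal_\eps(t_\eps,q_\eps)$) in \eqref{general_mutual} and is nonnegative, it only helps the inequality. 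The loading difference $-\int_\Omega g_\eps(t_\eps)\cdot(\tilde u_\eps-u_\eps)\dd x\to-\int_\Omega g_0(t)\cdot(\tilde u-u)\dd x$ follows from $g_\eps\to g_0$ in $W^{1,1}(0,T;L^2)$ (hence continuity in time and convergence at $t_\eps\to t$) together with compact Sobolev embedding giving $u_\eps\to u$, $\tilde u_\eps\to\tilde u$ in $L^2$. Assembling these pieces yields \eqref{general_mutual}, hence (H6), hence (H5).

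Finally, for case a) one must also confirm that energetic solutions to $(\Qcal,\Ecal_\eps^{\mathrm{rig}},\Dcal_\geq)$ actually exist, so that \eqref{evGamma} (as opposed to only the approximate-incremental statement) is meaningful; this I would get from the standard existence theory for rate-independent systems, e.g. \cite[Theorem~2.1.6]{MiR15}, checking compactness of sublevels of $\Ecal_\eps^{\mathrm{rig}}(t,\cdot)$ — which holds because $\Acal_\eps^{(e_1),\mathrm{rig}}$ restricts to a single rotation and a shear bounded in $W^{1,2}$ via the one-dimensional structure $\partial_1\gamma=0$ and the quadratic energy — together with the closedness and lower semicontinuity needed for the abstract existence result, and using that $\Dcal_\geq$ is a lower-semicontinuous quasi-distance. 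For case b) no such existence is claimed at the $\eps$-level, consistently with the $\xrightarrow{ev\text{-}\Gamma_{\mathrm{app}}}$ formulation, and \eqref{evGammaAIP} follows from (H1), (H3), (H4), (H5) by \cite[Theorem~2.5.1]{MiR15}.

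The main obstacle I anticipate is the mutual recovery sequence in case a): one cannot use the convenient trick of rotating $u_\eps$ by $\tilde R R_{u_\eps}^{-1}$ because the (constant) rotations $R_{u_\eps}$ need not converge to the target rotation $\tilde R$, so the recovered deformation must be synthesized from scratch, and simultaneously it must (i) be admissible with $\tilde\gamma_\eps=0$ on the rigid layers, (ii) converge strongly enough that the dissipation converges rather than merely satisfying a liminf bound, and (iii) respect the pointwise monotonicity constraint $\gamma_\eps\geq\tilde\gamma_\eps$ imposed by $\Dcal_\geq$. Reconciling these three requirements in a single explicit layer-wise construction — in particular getting strong $L^1$-convergence of $\gamma_\eps-\tilde\gamma_\eps$ while truncating to enforce monotonicity — is the delicate point, and it is precisely where the special geometry of the stratified problem (shears constant along layers, rigid layers forcing the effective factor $1/\lambda$) is used.
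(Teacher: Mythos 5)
Your overall scheme (checking (H1), (H3), (H4) and then (H5) via a mutual recovery sequence (H6), plus existence of energetic solutions at the $\eps$-level in case a)) coincides with the paper's, and the peripheral steps are fine. The gap sits exactly where you yourself flag the difficulty: the construction of the mutual recovery sequence. You build $\tilde\gamma_\eps$ from the target alone, as a plain recovery sequence of the type $\frac1\lambda\tilde\gamma\,\mathbbm{1}_{\eps\Ysoft}$, and in case a) truncate by $\gamma_\eps$ to restore $\gamma_\eps\geq\tilde\gamma_\eps$, claiming strong $L^1$-convergence of $\gamma_\eps-\tilde\gamma_\eps$. But $\gamma_\eps$ is the given stable sequence: it is not explicit and converges only weakly in $L^2(\Omega)$, possibly with oscillations (in $x_2$ in case a), also in $x_1$ in case b)). Consequently: (i) without truncation the constraint $\gamma_\eps\geq\tilde\gamma_\eps$ fails in general even when $\gamma\geq\tilde\gamma$, so $\Dcal_\geq(q_\eps,\tilde q_\eps)=+\infty$; (ii) with the truncation $\min\{\gamma_\eps,\frac1\lambda\tilde\gamma\,\mathbbm{1}_{\eps\Ysoft}\}$ you lose the identification of the weak limit of $\tilde\gamma_\eps$ (the minimum is not weakly continuous), hence neither $\tilde q_\eps\to\tilde q$ in $\Qcal$ nor the energy limsup is available; (iii) in case b), $\limsup_\eps\delta\|\gamma_\eps-\frac1\lambda\tilde\gamma\chi_{\eps\Ysoft}\|_{L^1(\Omega)}$ can strictly exceed $\delta\|\gamma-\tilde\gamma\|_{L^1(\Omega)}$ (take $\gamma=\tilde\gamma=0$ and $\gamma_\eps$ sign-oscillating with small $L^2$-norm but order-one $L^1$-norm: the excess dissipation is linear in the oscillation, while the compensating term $-\Ecal_\eps(t_\eps,q_\eps)$ in \eqref{general_mutual} only gains quadratically), so the inequality \eqref{general_mutual} fails for your ansatz.

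The fix, which is the paper's actual construction, is to anchor the recovery sequence at $\gamma_\eps$ rather than at $\tilde\gamma$: in case a) take $\tilde R_\eps=\tilde R$ and $\tilde\gamma_\eps=\gamma_\eps+\frac1\lambda(\tilde\gamma-\gamma)\mathbbm{1}_{\eps\Ysoft}$, i.e.\ an adapted ``quadratic trick'' (contrary to your remark, this trick is unavailable only for $s=e_2$, not for $s=e_1$). Then $\gamma_\eps-\tilde\gamma_\eps=\frac1\lambda(\gamma-\tilde\gamma)\mathbbm{1}_{\eps\Ysoft}$ is explicit, so monotonicity $\gamma\geq\tilde\gamma$ transfers verbatim without truncation, the dissipation converges by the Riemann--Lebesgue lemma, and the cross term $\frac2\lambda(\tilde\gamma-\gamma)\gamma_\eps$ in the energy difference passes to the limit using only $\gamma_\eps\weakly\gamma$ and $\mathbbm{1}_{\eps\Ysoft}\gamma_\eps=\gamma_\eps$; admissibility holds since $\partial_1\gamma_\eps=0$ in the rigid-rotation class. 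In case b), where $\partial_1\gamma_\eps$ need not vanish, one replaces $\gamma_\eps$ by its $x_1$-average and sets $\tilde\gamma_\eps=\frac12\int_{-1}^1\gamma_\eps\dd{x_1}+\frac1\lambda(\tilde\gamma-\gamma)\chi_{\eps\Ysoft}$: then $\partial_1\tilde\gamma_\eps=0$ gives admissibility with the constant rotation $\tilde R$, Jensen's inequality handles the comparison of the stored energies, and the extra dissipation $\delta\int_\Omega|\gamma_\eps-\frac12\int_{-1}^1\gamma_\eps\dd{x_1}|\dd{x}$ is controlled via Poincar\'e by $\|\partial_1\gamma_\eps\|_{L^2(\Omega)}\to0$, which follows from the uniform energy bound together with $\tau_\eps\to+\infty$. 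Without this anchoring at $\gamma_\eps$, the three requirements you list cannot be reconciled, so as written your verification of (H6), and hence of (H5), is incomplete in both cases.
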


\begin{proof}
All the basic assumptions of the theory summarized above are satisfied. While \CCC (H1), (H3), as well as (H2) for a), \CCC
 are immediate to check, (H4) \CCC follows from\BBB~\cite{ChK17}. \BBB It remains to prove the existence of mutual recovery sequences, that is, (H6). 

Consider $q, \tilde{q}\in \Qcal$ with $u, \tilde{u}\in \Acal^{(e_1)}$, i.e.,~
\begin{center}
$\nabla u=R(\Ibb + \gamma e_1\otimes e_2)$ \quad and \quad $\nabla \tilde{u} = \tilde{R}(\Ibb+ \tilde{\gamma} e_1\otimes e_2)$ 
\end{center}
with $R, \tilde{R}\in SO(2)$ and $\gamma, \tilde{\gamma}\in L^2(\Omega)$ such that $\partial_1\gamma =\partial_1\tilde{\gamma} = 0$. Moreover, let $(t_\eps, q_\eps)_\eps\subset [0, T]\times \Qcal$ be 
bounded energy sequence for
 $(\Qcal, \Ecal_\eps, \Dcal)_\eps$ with 
 \begin{align}\label{boundedenergy}
\sup_{\eps>0} \Ecal_\eps(t_\eps, q_\eps)<+\infty,
\end{align} 
 such that $(t_\eps, q_\eps)\to (t,q)$ in $[0, T]\times\Qcal$. 

As a consequence of 
 this uniform energy bound, one obtains 
 that $u_\eps\in \Acal^{(e_1)}_\eps$ for all $\eps>0$; in the case of $\Ecal_\ep=\Ecal_\ep^{\rm rig}$, one even has $u_\eps\in \Acal^{(e_1), \rm rig}_{\eps}$. Moreover, we infer from the Radon-Riesz theorem that
\begin{align*}
R_\eps\to R \quad \text{in $L^2(\Omega;\R^{2\times 2})$} \quad \text{and}\quad \gamma_\eps\weakly \gamma \quad \text{in $L^2(\Omega)$,}
\end{align*} 
where, for the sake of a simpler notation, we write $R_\eps$ for $R_{u_\eps}$ and $\gamma_\eps$ for $\gamma_{u_\eps}$. 
The task is to find a sequence $(\tilde{q}_\eps)_\eps\subset \Qcal$ with $\tilde q_\eps\weakly \tilde q$ in $\Qcal$ that satisfies  
\begin{align}\label{recov}
\limsup_{\eps\to 0} \Ecal_\eps(t_\eps,\tilde{q}_\eps) - \Ecal_\eps(t_\eps, q_\eps) + \Dcal(q_\eps, \tilde{q}_\eps) \leq \Ecal_0(t,\tilde{q}) - \Ecal_0(t, 
q)+ \Dcal_0(q,\tilde{q}),
\end{align}
with $\Dcal=\Dcal_0=\Dcal_\geq$ in a) and $\Dcal=\Dcal_0=\Dcal^1$ in b). 
Observe that necessarily, $\tilde{u}_\eps\in \Acal^{(e_1)}_\eps$ for all $\eps>0$, and additionally, for $\Ecal_\ep=\Ecal_\ep^{\rm rig}$, one needs $\tilde{u}_\eps\in \Acal^{(e_1), \rm rig}_\eps$. 
We now detail the construction of mutual recovery sequences for the two scenarios described in a) and b).

a) In this case, the problem is essentially (up to global rotations) one-dimensional with quadratic energy, which allows us to adapt what is frequently referred to as the ``quadratic trick'', cf.~\cite[Section~3.5.4]{MiR15}. \CCC In particular, $(\Qcal, \Ecal_\eps^{\rm rig}, \Dcal_\geq)$ has an energetic solution for each $\eps>0$.
To meet the required constraints for admissible sequences of $\Ecal_\eps^{\rm rig}$, we define $\tilde{u}_\eps\in \Acal_{\eps}^{(e_1)}$ by setting
\begin{align*} 
\tilde{R}_\eps  := \tilde{R}_{u_\eps}= \tilde{R}\qquad  \text{and}\qquad 
\tilde{\gamma}_\eps := \tilde{\gamma}_{u_\eps}= \gamma_\eps + \frac{1}{\lambda} (\tilde{\gamma} - \gamma)\mathbbm{1}_{\eps\Ysoft}
\end{align*}
for all $\eps>0$, recalling that $\lambda\in (0,1)$ denotes the relative thickness of the soft layers. 
Indeed, since $\partial_1 \gamma_\eps=0$ due to $u_\eps \in \Acal^{(e_1)}$, also $\partial_1 \tilde{\gamma}_\eps = 0$, and hence, the vector field $\tilde{R}_\eps(\Ibb+\tilde{\gamma}_\eps e_1\otimes e_2)$ is indeed a gradient field, namely for the potential $\tilde{u}_\eps$. 

In view of $\tilde{\gamma}_\eps\weakly \tilde{\gamma}$ in $L^2(\Omega)$ and $\tilde{u}_\eps\weakly \tilde{u}$ in $W^{1,2}(\Omega;\R^2)$, it follows that
\begin{align}\label{a)energy}\begin{split}
\limsup_{\eps\to 0} \Ecal_\eps^{\rm rig}(t_\eps, \tilde{q}_\eps) - \Ecal_\eps^{\rm rig}(t_\eps, q_\eps) &=\limsup_{\eps\to 0} \int_{\Omega\cap \eps\Ysoft} \frac{1}{\lambda^2} (\tilde{\gamma}-\gamma)^2  + \frac{2}{\lambda}(\tilde{\gamma}-\gamma)\gamma_\eps \dd{x} \\ &\qquad \qquad\qquad\qquad\qquad \qquad-\int_{\Omega} g_\eps(t_\eps)\cdot (\tilde{u}_\eps - u_\eps) \dd{x}  \\ & = \int_\Omega \frac{1}{\lambda} (\tilde{\gamma} - \gamma)^2 +\frac{2}{\lambda}\tilde{\gamma}\gamma - \frac{2}{\lambda}\gamma^2 \dd{x}-\int_\Omega \color{black} g_0(t) \color{black} \cdot (\tilde{u} - u) \dd{x} \\ &=  \int_\Omega \frac{1}{\lambda} \tilde{\gamma}^2 - \frac{1}{\lambda}\gamma^2 \dd{x}-\int_\Omega \color{black}  g_0(t) \color{black}\cdot (\tilde{u} - u)\dd x\\ & =  \Ecal_0(t,\tilde{q}) - \Ecal_0(t, q), 
\end{split}
\end{align}
using that $\mathbbm{1}_{\eps\Ysoft}\weaklystar \lambda\id$ in $L^\infty(\Omega)$ by the Riemann-Lebesgue lemma and that $\mathbbm{1}_{\eps\Ysoft}\gamma_\eps = \gamma_\eps$ for all $\eps$. 

Due to the monotonicity constraint in $\Dcal_{\geq}$, we may assume that ${\gamma}\geq \tilde{\gamma}$, which implies that ${\gamma}_\eps \geq \tilde{\gamma}_\eps$. Then, 
\begin{align}\label{a)diss}
\begin{split}
\limsup_{\eps\to 0} \Dcal_\geq (\gamma_\eps, \tilde{\gamma}_\eps) &= \limsup_{\eps\to 0} \delta \int_\Omega {\gamma}_\eps - \tilde{\gamma}_\eps \dd{x}  = \limsup_{\eps\to 0} \frac{\delta}{\lambda}\int_\Omega \mathbbm{1}_{\eps\Ysoft}(\tilde{\gamma}-\gamma)\dd{x}\\ & =\delta \int_\Omega {\gamma} -\tilde{\gamma} \dd{x} =\Dcal_{\geq}(\gamma, \tilde{\gamma}). 
\end{split}
\end{align}
Combining \eqref{a)energy} and \eqref{a)diss}
gives~\eqref{recov}, as desired. 

b) We start by observing that the uniform energy bound~\eqref{boundedenergy} implies 
\begin{align}\label{lim}
\lim_{\eps\to 0} \int_\Omega |\partial_1 \gamma_\eps|^2 \dd{x} =0;
\end{align}
indeed,
 with $\gamma_\eps=\gamma_{u_\eps}$, we obtain via the Poincar\'e-Wirtinger inequality that
 \begin{align*}
 \int_\Omega \gamma_\eps^2\dd x + \tau_\eps \norm{\partial_1\gamma_\eps}_{L^2(\Omega)}^2 &  \leq  \sup_{\eps>0} \Ecal_\eps(t_\eps, q_\eps)+ \norm{g_\eps}_{W^{1,1}(0,T;L^2(\Omega))} \norm{u_\eps}_{L^2(\Omega)}\\ & \leq C(1+ \norm{\nabla u_\eps}_{L^2(\Omega;\R^{2\times 2})})\leq 4C(1+\norm{\gamma_\eps}_{L^2(\Omega)}),
\end{align*}
with a constant $C>0$ independent of $\eps$. 
This shows that $(\gamma_\eps)_\eps$ is uniformly bounded in $L^2(\Omega)$, as well as 
$\norm{\partial_1\gamma_\eps}_{L^2(\Omega)}\to 0$, considering that $\tau_\eps\to +\infty$ as $\eps$ tends to zero.

Let us define $\tilde{R}_\eps = \tilde{R}$ and
\begin{align}\label{gammahat}
\tilde{\gamma}_\eps =\frac12 \int_{-1}^1\gamma_\eps \dd{x_1} + \frac{1}{\lambda} (\tilde{\gamma} - \gamma)\chi_{\eps\Ysoft}
\end{align}
for $\eps>0$. By construction, we have again that $\partial_1 \tilde{\gamma}_\eps=0$. The ansatz in~\eqref{gammahat} can be viewed as yet a refined version of the modified ``quadratic trick'' in a).

In proving~\eqref{recov}, the convergence of the energy terms follows in analogy to~\eqref{a)energy}, if we account for the fact that $\frac{1}{2}\int_{-1}^1 \gamma_\eps \dd{x_1}\weakly \gamma$ in $L^2(\Omega)$ and if we use the estimate 
\begin{align*}
\int_{\Omega} \Bigl(\frac{1}{2}\int_{-1}^1 \gamma_\eps \dd x_1\Bigr)^2 - \gamma_\eps^2 \dd{x} \leq 0
\end{align*}
 by Jensen's inequality. 

Regarding the dissipative terms, we use the one-dimensional Poincar{\'e} inequality with Poincar{\'e} constant $c>0$ to argue that
\begin{align*}
\limsup_{\eps\to 0}\Dcal^1(\gamma_\eps, \tilde{\gamma}_\eps) & \leq \limsup_{\eps\to 0} \delta \int_\Omega \absB{ \gamma_\eps -\frac12\int_{-1}^1\gamma_\eps \dd x_1} \dd{x} + \delta \displaystyle \int_\Omega \frac{1}{\lambda} |\tilde{\gamma}-\gamma| \chi_{\eps\Ysoft}\dd{x}\\   & \leq \delta \limsup_{\eps\to 0}  c \int_\Omega |\partial_1\gamma_\eps| \dd{x} +\delta \displaystyle \int_\Omega  |\tilde{\gamma}-\gamma| \dd{x} \leq c \delta \lim_{\eps\to 0} \norm{\partial_1 \gamma_\eps}_{L^2(\Omega)}
 + \Dcal^1(\gamma, \tilde{\gamma}).\nonumber
 \end{align*}
The proof of~\eqref{recov} follows then by \eqref{lim}. 
\end{proof}

\begin{remark}
Following the proof of Theorem~\ref{theo:evol1}\,a), it is immediate to see that we get the analogous evolutionary $\Gamma$-convergence result if the monotonicity assumption in the dissipation is dropped, i.e., $(\Qcal, \Ecal^{\rm rig}_{\eps}, \Dcal^1) \stackrel{ev\text{-}\Gamma}{\longrightarrow} (\Qcal, \Ecal_0, \Dcal^1)$ as $\eps\to 0$. 
\end{remark}

\section*{acknowledgements}
E.D.~acknowledges support from the Austrian Science Fund (FWF) through the projects F 65, I 4052, V 662, and Y1292, as well as from BMBWF through the OeAD-WTZ project CZ04/2019. C.K. was supported by the Dutch Research Council (NWO) through the project TOP2.17.01. Most of this work was done while C.K. was affiliated with Utrecht University and partially supported by the Westerdijk Fellowship program. Both authors are grateful for the opportunity to collaborate during a stay at MFO Oberwolfach in the scope of the Research in Pairs program.


\end{document}